\newcommand{\R}{\mathbb{R}}
\newcommand{\N}{\mathbb{N}}
\newcommand{\C}{\mathbb{C}}
\newcommand{\Z}{\mathbb{Z}}
\newcommand{\T}{\mathcal{T}}
\newcommand{\X}{\mathcal{X}}
\newcommand{\A}{\mathcal{A}}
\newcommand{\W}{\Omega}
\newcommand{\bPhi}{\bm{\varPhi}}
\newcommand{\bPsi}{\bm{\varPsi}}
\newcommand{\w}{\omega}
\newcommand{\g}{\gamma}
\newcommand{\s}{\bm{\mathcal{F}}}
\newtheorem{theorem}{Theorem}[section]
\newtheorem{lemma}[theorem]{Lemma}
\newtheorem{proposition}[theorem]{Proposition}
\def\clspan{{\overline{\mathrm{span}}}}
\def\rk{\text{rk}}
\newcommand{\esssup}{\mathop{\rm ess\,sup}}
\theoremstyle{remark}
\newtheorem{remark}[theorem]{Remark}
\theoremstyle{definition}
\newtheorem{definition}[theorem]{Definition}
\subjclass[2010]{Primary 42C40, 43A70; Secondary 42C15, 22B99}
\keywords{shift invariant space,  fibers, range functions, miltiplicatively invariant spaces, frame, Gramian, LCA groups}
\title[Linear combinations of generators of MI spaces]{Linear combinations of generators in multiplicatively invariant spaces}
\author{V. Paternostro}
\begin{document}

\address{\textrm{(V. Paternostro)}
Departamento de Matem\'atica, Facultad de Ciencias Exactas y Naturales, Universidad  de
Buenos Aires, Ciudad Universitaria, Pabell\'on I, 1428 Buenos Aires, Argentina and IMAS-CONICET, Consejo 
Nacional de Investigaciones Cient\'ificas y T\'ecnicas, Argentina}
\email{vpater@dm.uba.ar}

\maketitle

\begin{abstract}
Multiplicatively invariant   (MI) spaces are closed subspaces of $L^2(\W,\mathcal{H})$ that are invariant under multiplications of (some) functions in $L^{\infty}(\W)$; they were first introduced by Bownik and Ross in 2014. In this paper we work with MI spaces  that are finitely generated. We prove that almost every  set of functions constructed by taking linear combinations of  the generators of a  finitely generated MI space is a new set of generators for the same space and we give necessary and sufficient conditions  on the linear combinations to preserve frame properties.
We then apply our results on MI spaces  to systems of translates in the context of locally compact abelian groups and we extend some results  previously proven for systems of integer translates in $L^2(\R^d)$.
\end{abstract}

\section{Introduction}
Given a vector valued space $L^2(\W,\mathcal{H})$ where $\W$ is a $\sigma$-finite measure space and $\mathcal{H}$ is a separable Hilbert space and  given $D$ a determining set for $L^1(\W)$ (see Section \ref{sec-MI} for a precise definition) a {\it multiplicatively invariant} (MI) space is a closed subspace of $L^2(\W,\mathcal{H})$ that is invariant under multiplications by  functions in $D$.
A particular case of MI spaces are the well-known {\it doubly invariant spaces} introduced by Helson \cite{Hel64} and Srinivasan \cite{Sri64}. Recently, MI spaces were introduced as presented here in  \cite{BR14} where also they were characterized in terms of range functions. The reason why MI spaces appear on the scene  is because they are strongly connected  to {\it shift invariant} (SI) spaces. In the classical euclidean case, a SI space is a closed subspace in $L^2(\R^d)$  that is invariant under translations by integers. These type of spaces are typically considered in sampling theory, \cite{AG01, Sun10, Sun05, ZZC06} and they also play a fundamental role in approximation theory as well as in frame and wavelet theory \cite{Gro04,HW96,Mal89}.  Shift invariant spaces have proven to be very useful models in many problems in signal and image processing. Due to their importance in theory and applications, their structure  has been deeply analyzed during the last twenty five years \cite{Bow00, dBDVR94a, dBDVR94b,  Hel64, RS95}.

Every SI space can be generated by a set $\Phi$ of functions in $L^2(\R^d)$ in the sense that it is
the closure of the space spanned by the integer translations of the functions in $\Phi$. When $\Phi$ is a finite set, we say that the SI space is finitely generated.
Concerning finitely generated SI spaces, a particular problem  of interest for us, is the following:
suppose that $\Phi=\{\phi_1, \ldots, \phi_m\}$ generates the SI space $V$, that is, $V=\clspan\{T_k\phi_j:\, k\in\Z, j=1, \ldots, m\}$. For $\ell\leq m$, let $\Psi=\{\psi_1, \ldots, \psi_\ell\}$ be a set of functions constructed by taking linear combinations of the functions in $\Phi$, i.e. $\psi_i=\sum_{j=1}^ma_{ij}\phi_j$ for $1\leq i\leq \ell$. The question is which are the linear combinations that produce new sets of generators for $V$?  and if in addition we know that $\{T_k\phi_j\}_{k\in\Z, j=1,\ldots, m}$ is a frame for $V$, when is also $\{T_k\psi_i\}_{k\in\Z, i=1,\ldots, \ell}$ a frame for $V$?
These two questions were completely answered in \cite{BK06} and \cite{CMP14}.
The problem of plain generators was addressed in \cite{BK06} where the authors proved  that almost every   linear combination of the original generators of $V$ generates $V$.
Regarding the second question, in \cite{CMP14}, the authors exactly characterized those  linear combinations that transfer the frame property from $\{T_k\phi_j\}_{k\in\Z, j=1,\ldots, m}$ to $\{T_k\psi_i\}_{k\in\Z, i=1,\ldots, \ell}$.

In the present work we study the questions formulated above but for MI spaces.
In our main result we show that almost every linear combination of generators of a MI space produces a new set of generators for the same space. We also characterize those linear combinations that preserve uniform frames
(see Definition \ref{def-uniform-frames}). Our results are then in the spirit of those in \cite{BK06, CMP14}.
As a first step, we work with finite dimensional subspaces. We prove that given a finite set of vectors $\mathcal{V}$ in a Hilbert space, almost every finite set of vectors constructed by taking linear combinations of the vectors in $\mathcal{V}$ spans the same subspace that $\mathcal{V}$ spans. This result will be the core  of what we then prove for MI spaces and  we also believe  it is of interest by itself.

As a consequence, we obtain similar results to \cite{BK06, CMP14} but for SI spaces considered in more general contexts than $L^2(\R^d)$.
The theory of shift invariant spaces has been extended to the setting of locally compact abelian (LCA)  groups, mainly in two different directions. First, in \cite{ BR14, CP10, KR10} SI spaces are subspaces of $L^2(G)$ where $G$ is an LCA group and the translations are taken along a subgroup $H$ of $G$ such that $G/H$ is compact. The  case when $H$ is  discrete was addressed in \cite{CP10, KR10} and in the recent paper \cite{BR14} the authors worked with the non-discrete case. Second, one can consider SI spaces in $L^2(\X)$ where $\X$ is a measure space and the translations are defined by the action of a discrete LCA group on $\X$, \cite{BHP14b}. In both cases, SI spaces  were characterized in terms of range functions using fiberizations techniques  obtaining results that extend those proven for SI spaces  in $L^2(\R^d)$ in \cite{Bow00}. This last fact is what connects SI spaces with  MI spaces. Then, our results in MI spaces allow us to provide a unified treatment to the problem of when linear combinations
of
generators in system of translates preserve generators and frame generators in both contexts described above.

The paper is organized as follows. In Section \ref{sec-generators} we show that generators of finite dimensional subspaces are generally preserved by the action of taking linear combinations.
Section \ref{sec-results-MI} is devoted to MI spaces. We first summarize in Section \ref{sec-MI} the basic properties of MI spaces. We prove in Section \ref{sec-MI-generators} that almost every linear combination of  generators of a MI space yields to a new set of generators for the same space (Theorem \ref{lc-of-MI-generators}). In Section \ref{sec-preserving-uniform-frame} we address the problem of preserving uniform
frames. Finally in Section \ref{sec-applications} we apply the result we obtain for MI spaces to systems of translates.

We finish this introduction by stating the notation we use.
\subsection{Notation and Definitions}
Here we set the notation we will use in the next sections, we recall the definition of frames and some basic results about linear algebra that will be important in what follows.

\begin{definition}\label{def-frame}
Let $\mathcal{H}$ be a separable Hilbert  space and   $\{f_k\}_{k\in \Z}$ be a sequence in $\mathcal{H}.$
The sequence $\{f_k\}_{k\in \Z}$ is said to be  a {\it frame} for $\mathcal{H}$ if there exist $0<\alpha\leq \beta$ such that
\begin{equation*}\label{eq-frame}
\alpha\|f\|^2\leq \sum_{k\in \Z} |\langle f,f_k\rangle|^2\leq \beta\|f\|^2
\end{equation*}
for all $f\in\mathcal{H}$. The constants $\alpha$ and $\beta $ are called {\it frame bounds}.
\end{definition}

For a set of vectors $\mathcal{X}=\{x_1, \ldots, x_n\}\subseteq \mathcal{H}$ we denote by $S(\mathcal{X})$ the subspace spanned by $\mathcal{X}$, i.e. $S(\mathcal{X})=\textnormal{span}{\{x_1, \ldots, x_n\}}$. The {\it Gramian} associated to $\mathcal{X}$ is the matrix $G_{\mathcal{X}}$  in $\C^{n\times n}$ whose entries are
$(G_{\mathcal{X}})_{ij}=\langle x_i, x_j\rangle.$
The Gramian is a positive-semidefinite matrix satisfying $G_{\mathcal{X}}^*=G_{\mathcal{X}}$.

Denote by $K_{\mathcal{X}}:\C^n\to\mathcal{H}$ the {\it synthesis operator} associated to $\mathcal{X}$ given by $K_{\mathcal{X}}c=\sum_{j=1}^nc_jx_j$ and by $K_{\mathcal{X}}^*:\mathcal{H}\to\C^n$ its adjoint, the so-called {\it analysis operator},  given by $K_{\mathcal{X}}^*h=\{\langle h, x_j\rangle \}_{j=1}^n$.
Note that the matrix representation of the operator $K_{\mathcal{X}}^*K_{\mathcal{X}}$ in the canonical basis of $\C^n$  is the transpose of $G_{\X}$, $G_{\X}^t$. It follows then that,
\begin{align}\label{dim-gramian}
\rk(G_{\mathcal{X}})&=\rk(G_{\mathcal{X}}^t)=\dim(Im(K_{\mathcal{X}}^*K_{\mathcal{X}}))\nonumber\\
&=\dim(Im(K_{\mathcal{X}}K_{\mathcal{X}}^*))
=\dim(Im(K_{\mathcal{X}}))\\
&=\dim(S(\mathcal{X}))\nonumber.
\end{align}

The set $\mathcal{X}$ is always a frame for $S(\mathcal{X})$ and its  frame bounds  are related to the Gramian in the following way: $0<\alpha\leq\beta$ are frame bounds of $\mathcal{X}$ if and only if $\Sigma(G_{\mathcal{X}})\subseteq \{0\}\cup [\alpha, \beta]$, where $\Sigma(G_{\mathcal{X}})$ is the set of eigenvalues of $G_{\mathcal{X}}$.

If $E\subseteq \C^d$ we indicate by $|E|$ its Lebesgue measure.

\section{Linear combinations preserving generators of subspaces}\label{sec-generators}
In this section, we are interested in
studying  which are the linear combinations of the generators of a finite dimensional subspace that preserve the property of generating the same subspace. Let us explain the problem in detail.
Let $\mathcal{H}$ be a separable Hilbert space and consider a finite set of elements in $\mathcal{H}$, $\mathcal{V}=\{v_1, \ldots, v_m\}$. Denote by $V$  the vector
whose entries are the elements of $\mathcal{V}$, i.e. $V=(v_1, \ldots, v_m)$.
For any $\ell$ such that $r\leq \ell\leq m$, where $r=\dim(S(\mathcal{V}))$, let $\mathcal{W}=\{w_1, \ldots, w_{\ell}\}$ be a set
constructed by taking linear combinations of elements of $\mathcal{V}$. This is, for  $i=1,\ldots,\ell$,  $w_i=\sum_{j=1}^m a_{ij}v_j$ for   some complex scalars $ a_{ij}$.
Collecting the coefficients of the linear combinations  in a matrix $A=\{a_{ij}\}_{i,j}\in \C^{\ell\times m}$, we can write in  matrix  notation
\begin{equation}\label{link-V-W}
 W=AV^t,
\end{equation}
where  $W=(w_1, \ldots, w_{\ell})$.
Therefore, the question is which are
the matrices $A$ that transfer  the property of being a set of generators for $S(\mathcal{V})$ from $\mathcal{V}$ to $\mathcal{W}$.
We shall answer this by showing  that for almost every matrix $A\in \C^{\ell\times m}$, the set $\mathcal{W}$ spans $S(\mathcal{V})$.

\begin{theorem}\label{thm-ppal}
 Let $\mathcal{V}=\{v_1, \ldots, v_m\}\subseteq \mathcal{H}$ and let $r=\dim(S(\mathcal{V}))$. For any $\ell$ such that $r\leq \ell\leq m$, consider
 the set of matrices
$\mathcal{R}=\{A\in\C^{\ell\times m}\,:\, S(\mathcal{V})=S(\mathcal{W}) \}$ where $\mathcal{W}$ is obtained from $\mathcal{V}$ by the relationship $W=AV^t$. Then,
$\C^{\ell\times m}\setminus \mathcal{R}$ has zero Lebesgue measure.
\end{theorem}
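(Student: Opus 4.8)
The plan is to reduce the statement to a rank condition on a single matrix-valued function of $A$ and then exploit the fact that the zero set of a nontrivial real polynomial is Lebesgue-null. First I would observe that, since each $w_i=\sum_{j}a_{ij}v_j$ lies in $S(\mathcal{V})$, one always has $S(\mathcal{W})\subseteq S(\mathcal{V})$; hence $A\in\mathcal{R}$ if and only if $\dim S(\mathcal{W})=r$. Using \eqref{dim-gramian} this becomes $\rk(G_{\mathcal{W}})=r$. A direct computation from $w_i=\sum_j a_{ij}v_j$ gives $G_{\mathcal{W}}=A\,G_{\mathcal{V}}\,A^*$ (up to the conjugation convention on the inner product), and since $\rk(A G_{\mathcal{V}} A^*)\le \rk(G_{\mathcal{V}})=r$ always, the complement we must control is exactly
\[
\C^{\ell\times m}\setminus\mathcal{R}=\{A\in\C^{\ell\times m}:\rk(A G_{\mathcal{V}} A^*)<r\}.
\]

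Next I would describe this set as the common zero locus of the $r\times r$ minors of $A G_{\mathcal{V}} A^*$. Identifying $\C^{\ell\times m}$ with $\R^{2\ell m}$ through the real and imaginary parts of the entries $a_{ij}$, each entry of $G_{\mathcal{W}}$ is a polynomial in these real coordinates; the dependence is on both $A$ and $\overline{A}$, so these are genuine real polynomials rather than holomorphic ones, but this causes no difficulty. Consequently every $r\times r$ minor of $G_{\mathcal{W}}$ is a real polynomial $p$ on $\R^{2\ell m}$, and the rank-drop locus above is contained in the zero set of any one such minor. It therefore suffices to exhibit a single minor that does not vanish identically, i.e. a single matrix $A_0$ for which $\rk(G_{\mathcal{W}})=r$.

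To produce $A_0$ I would use the hypothesis $r\le \ell$ together with $\dim S(\mathcal{V})=r$: choose indices $j_1<\cdots<j_r$ so that $\{v_{j_1},\dots,v_{j_r}\}$ is a basis of $S(\mathcal{V})$, and let $A_0$ be the matrix whose $i$-th row is the standard basis vector $e_{j_i}^t$ for $1\le i\le r$ and is zero for $r<i\le\ell$. Then $\mathcal{W}$ consists of $v_{j_1},\dots,v_{j_r}$ together with $\ell-r$ zero vectors, so $S(\mathcal{W})=S(\mathcal{V})$ and $\rk(G_{\mathcal{W}})=r$; hence some fixed $r\times r$ minor $p$ is nonzero at $A_0$ and is thus a nontrivial real polynomial. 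Finally I would invoke the standard fact that the zero set of a nonzero real polynomial on $\R^{2\ell m}$ has Lebesgue measure zero; since $\C^{\ell\times m}\setminus\mathcal{R}$ is contained in $\{p=0\}$, it is Lebesgue-null, which is the claim.

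The routine parts are the identity $G_{\mathcal{W}}=A G_{\mathcal{V}} A^*$ and the construction of $A_0$. The main conceptual point to get right is that, because $G_{\mathcal{W}}$ depends on $A$ through $A^*$ as well, one cannot argue with holomorphic polynomials on $\C^{\ell\times m}$; the correct framework is real polynomials on $\R^{2\ell m}$, where the measure-zero statement for zero sets still holds. An alternative that restores holomorphicity, which I would keep in reserve, is to fix a basis of $S(\mathcal{V})$, write the coordinate matrix of $\mathcal{W}$ as $AB$ with $B\in\C^{m\times r}$ of rank $r$, and observe that the $r\times r$ minors of $AB$ are honest holomorphic polynomials in the entries of $A$; the same non-vanishing argument then applies.
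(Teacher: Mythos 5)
Your proposal is correct. The reduction is identical to the paper's: you pass from $S(\mathcal{V})=S(\mathcal{W})$ to the rank condition $\rk(AG_{\mathcal{V}}A^*)=\rk(G_{\mathcal{V}})$ via \eqref{dim-gramian} and Lemma \ref{lemma-gramian}, exactly as the paper does in deriving \eqref{eq:R}. Where you diverge is in the final measure-zero step. The paper (Proposition \ref{prop-rank-preserving}) diagonalizes $G_{\mathcal{V}}=UDU^*$, uses the measure-preserving change of variables $A\mapsto AU$, and then computes $\rk(BDB^*)=\rk(BD^{1/2})=\rk(B_1)$, which identifies the exceptional set \emph{exactly} as the non-full-rank locus of the first $r$ columns of $B$ in $\C^{\ell\times r}$, a standard null set. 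You instead work directly with the $r\times r$ minors of $AG_{\mathcal{V}}A^*$ as real polynomials in the real and imaginary parts of the entries of $A$, exhibit a witness $A_0$ (a row-selection matrix picking out a basis from $\mathcal{V}$, which exists precisely because $r\le\ell$) at which some minor is nonzero, and conclude from the fact that the zero set of a nontrivial real polynomial is Lebesgue-null. Both arguments are elementary and rest ultimately on the same polynomial fact; the paper's diagonalization buys an exact description of the exceptional set and avoids having to construct a witness, while your version skips the spectral decomposition at the cost of the (correctly flagged) subtlety that the minors are real-analytic but not holomorphic in $A$. Your reserve variant, factoring the coordinate matrix as $AB$ with $B\in\C^{m\times r}$ of rank $r$ to restore holomorphicity, is essentially a coordinate form of the paper's $BD^{1/2}$ step. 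No gaps.
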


\vspace{-0.2cm}
In order to prove  Theorem \ref{thm-ppal} we first give a description of the set $\mathcal{R}$  in terms of the Gramians associated to $\mathcal{V}$ and $\mathcal{W}$ when $\mathcal{V}$ and $\mathcal{W}$ are linked by  \eqref{link-V-W}.
The connection  between the Gramians $G_{\mathcal{V}}$ and $G_{\mathcal{W}}$ is provided in the upcoming lemma (see also \cite[Proposition 2.5] {CMP14}).\vspace{-0.2cm}

\begin{lemma}\label{lemma-gramian}
Let $\mathcal{V}=\{v_1, \ldots, v_m\}\subseteq \mathcal{H}$. If $\mathcal{W}=\{w_1, \ldots, w_{\ell}\}$ is constructed from $\mathcal{V}$ by taking linear combinations of its elements as in \eqref{link-V-W} then, the Gramians associated to $\mathcal{V}$ and $\mathcal{W}$ satisfy
$G_{\mathcal{W}}=AG_{\mathcal{V}}A^*.$\vspace{-0.2cm}
\end{lemma}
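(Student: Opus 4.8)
The plan is to verify the identity $G_{\mathcal{W}}=AG_{\mathcal{V}}A^*$ by a direct entrywise computation, comparing the $(i,k)$ entry of each side. First I would write out the $(i,k)$ entry of the left-hand side from the definition of the Gramian, namely $(G_{\mathcal{W}})_{ik}=\langle w_i,w_k\rangle$, and substitute the expressions $w_i=\sum_{j=1}^m a_{ij}v_j$ and $w_k=\sum_{l=1}^m a_{kl}v_l$ coming from \eqref{link-V-W}. Expanding and using that the inner product of $\mathcal{H}$ is linear in the first argument and conjugate-linear in the second (the convention implicit in the excerpt, where $(G_{\mathcal{X}})_{ij}=\langle x_i,x_j\rangle$ and $K_{\mathcal{X}}^*h=\{\langle h,x_j\rangle\}_j$), one pulls the scalar $a_{ij}$ out of the first slot and the conjugate $\overline{a_{kl}}$ out of the second slot, obtaining
\begin{equation*}
(G_{\mathcal{W}})_{ik}=\Big\langle \sum_{j=1}^m a_{ij}v_j,\ \sum_{l=1}^m a_{kl}v_l\Big\rangle=\sum_{j=1}^m\sum_{l=1}^m a_{ij}\,\overline{a_{kl}}\,\langle v_j,v_l\rangle=\sum_{j,l} a_{ij}\,(G_{\mathcal{V}})_{jl}\,\overline{a_{kl}}.
\end{equation*}

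The second step is to recognize this double sum as an entry of a triple matrix product. Since $(A^*)_{lk}=\overline{a_{kl}}$, the right-hand side equals
\begin{equation*}
\sum_{j,l} a_{ij}\,(G_{\mathcal{V}})_{jl}\,(A^*)_{lk}=(AG_{\mathcal{V}}A^*)_{ik},
\end{equation*}
which is exactly the $(i,k)$ entry of $AG_{\mathcal{V}}A^*$. As $i$ and $k$ range over $1,\ldots,\ell$ this proves the claimed equality. I would also note that the resulting formula is manifestly consistent with the stated properties of the Gramian: $AG_{\mathcal{V}}A^*$ is positive-semidefinite and self-adjoint whenever $G_{\mathcal{V}}$ is, as it must be.

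I do not expect a genuine obstacle here; the only point requiring care is bookkeeping, specifically tracking which factor is conjugated. The conjugation must land on the coefficients coming from $w_k$ (the second argument of the inner product), which is precisely what produces $A^*$ rather than $A^t$ on the right. As an alternative that avoids indices, I could argue at the operator level: from $w_i=\sum_j a_{ij}v_j$ one checks that the synthesis operators satisfy $K_{\mathcal{W}}=K_{\mathcal{V}}A^t$, whence $G_{\mathcal{W}}^t=K_{\mathcal{W}}^*K_{\mathcal{W}}=\overline{A}\,G_{\mathcal{V}}^t\,A^t$, and transposing yields $G_{\mathcal{W}}=AG_{\mathcal{V}}A^*$. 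Either route is routine, so I would present the direct entrywise computation as the cleanest and self-contained argument.
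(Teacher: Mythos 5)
Your proof is correct and is essentially identical to the paper's: both verify the identity by expanding $(G_{\mathcal{W}})_{ik}=\langle w_i,w_k\rangle$ via sesquilinearity and recognizing the resulting double sum as the $(i,k)$ entry of $AG_{\mathcal{V}}A^*$. The additional remarks (consistency check, operator-level alternative) are fine but not needed.
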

\begin{proof}
Since  $W=AV^t$, we obtain
\begin{align*}
(G_{\mathcal{W}})_{ij}
&=\big\langle \sum_{k=1}^ma_{ik}v_k, \sum_{r=1}^ma_{jr}v_r\big\rangle
=\sum_{r, k=1}^ma_{ik}\overline{a_{jr}}\underbrace{\langle v_k, v_r\rangle}_{(G_{\mathcal{V}})_{kr}}
=(AG_{\mathcal{V}}A^*)_{ij}.
\end{align*}
\end{proof}

For $\mathcal{V}$ and $\mathcal{W}$ as in Theorem \ref{thm-ppal}, i.e linked by \eqref{link-V-W},
we have that
$S(\mathcal{W})\subseteq S(\mathcal{V})$. Hence, $S(\mathcal{W})= S(\mathcal{V})$ if and only if $\dim(S(\mathcal{W}))= \dim(S(\mathcal{V}))$. Now, by \eqref{dim-gramian} and Lemma \ref{lemma-gramian},
$\dim(S(\mathcal{W}))= \dim(S(\mathcal{V}))$ is true if and only if $\rk(AG_{\mathcal{V}}A^*)=\rk(G_{\mathcal{V}})$.
As a consequence, the set $\mathcal{R}$ in Theorem \ref{thm-ppal} can be described as the set of matrices  preserving the rank of $G_{\mathcal{V}}$ under the action $AG_{\mathcal{V}}A^*$
\begin{equation}\label{eq:R}
\mathcal{R}=\{A\in\C^{\ell\times m}\,:\, \rk(AG_{\mathcal{V}}A^*)=\rk(G_{\mathcal{V}})\}.
\end{equation}
Having the description of $\mathcal{R}$ in terms of the Gramian of $\mathcal{V}$, the  proof of Theorem \ref{thm-ppal} follows from the next  rank-preserving result:

\begin{proposition}\label{prop-rank-preserving}
Let $G$ be a  positive-semidefinite matrix in $\C^{m\times m}$ such  that  $G=G^*$ and  let $r=\rk(G)$. For any  $r\leq \ell\le m$
define  the set $\mathcal{R}(G)=\{A\in\C^{\ell\times m}\,:\, \rk(G)=\rk(AGA^*)\}$.
Then, $\mathcal{N}(G):=\C^{\ell\times m}\setminus \mathcal{R}(G)$ has zero Lebesgue measure.
\end{proposition}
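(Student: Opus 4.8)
The plan is to reduce the condition $\rk(AGA^*)=\rk(G)$ to a full-rank condition on a single linear image of $A$, and then to recognize the bad set $\mathcal{N}(G)$ as contained in the zero set of a nontrivial polynomial, which forces it to have Lebesgue measure zero.

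First I would use that $G$ is Hermitian and positive-semidefinite with $\rk(G)=r$ to factor it as $G=LL^*$ with $L\in\C^{m\times r}$ of full column rank $r$; this comes from the spectral decomposition, keeping only the $r$ columns associated with the nonzero eigenvalues. Then $AGA^*=(AL)(AL)^*$, and using the elementary identity $\rk(MM^*)=\rk(M)$ (which follows from $\ker(MM^*)=\ker(M^*)$, since $MM^*x=0$ implies $\|M^*x\|^2=0$), one gets $\rk(AGA^*)=\rk(AL)$. As $AL\in\C^{\ell\times r}$ has rank at most $r$, the condition defining $\mathcal{R}(G)$ becomes simply $\rk(AL)=r$, and the bad set is
\[
\mathcal{N}(G)=\{A\in\C^{\ell\times m}\,:\,\rk(AL)<r\}.
\]

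Next I would describe $\mathcal{N}(G)$ algebraically. The condition $\rk(AL)<r$ is equivalent to the vanishing of every $r\times r$ minor of the $\ell\times r$ matrix $AL$. Since the entries of $AL$ depend linearly (holomorphically, with no conjugates) on the entries of $A$, each such minor is a holomorphic polynomial $p_I$ in the entries of $A$, where $I$ runs over the choices of $r$ rows out of $\ell$. Hence $\mathcal{N}(G)=\bigcap_I\{p_I=0\}$, and the key point will be that this intersection lies inside the zero set of a single \emph{nonzero} polynomial.

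The main step, and the one needing care, is therefore to exhibit at least one $A_0$ with $\rk(A_0L)=r$, so that some $p_{I_0}\not\equiv 0$. Here I would exploit that $L$ has full column rank, so $L^*L\in\C^{r\times r}$ is invertible; taking $A_0$ to be the $\ell\times m$ matrix whose first $r$ rows are $L^*$ and whose remaining $\ell-r$ rows are zero (possible precisely because $\ell\ge r$) gives $A_0L=\begin{pmatrix}L^*L\\0\end{pmatrix}$, which has rank $r$. Thus the minor $p_{I_0}$ indexed by the first $r$ rows satisfies $p_{I_0}(A_0)\neq 0$, so $p_{I_0}$ is a nontrivial polynomial. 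Finally I would invoke the standard fact that the zero set of a nonzero holomorphic polynomial on $\C^{\ell\times m}\cong\R^{2\ell m}$ has Lebesgue measure zero (proved by induction on the number of variables via Fubini, using that a nonzero one-variable polynomial has finitely many roots). Since $\mathcal{N}(G)\subseteq\{p_{I_0}=0\}$, this yields $|\mathcal{N}(G)|=0$, as desired.
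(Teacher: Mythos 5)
Your proof is correct and follows essentially the same route as the paper: both factor $G$ so as to reduce the condition $\rk(AGA^*)=\rk(G)$ to the full-rank condition $\rk(AL)=r$ for a full-column-rank factor $L$ (the paper does this via the unitary diagonalization $U^*GU=D$ and the identity $\rk(BDB^*)=\rk(BD^{1/2})$), and both conclude by observing that rank-deficiency is a null condition on the matrix entries. The only cosmetic difference is that the paper uses the measure-preserving change of variables $A\mapsto AU$ and then quotes that the non-full-rank matrices in $\C^{\ell\times r}$ form a null set, whereas you stay with a general factor $L$ and certify the non-triviality of the relevant minor polynomial explicitly via the witness $A_0$.
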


\begin{proof}
 Since $G$ is a self-adjoint  positive-semidefinite matrix, there exists a unitary matrix $U\in\C^{m\times m}$ and positive scalars $\lambda_1\geq\ldots \geq \lambda_r >0$ such that $U^*GU=D$ where $D$ is the diagonal matrix in $\C^{m\times m}$, $D=diag(\lambda_1, \ldots,  \lambda_r, 0, \ldots, 0)$. In particular, $r=\rk(G)=\rk(D)$.

 Note that for any  $A\in\C^{\ell\times m}$, $A$ preserves the rank of $G$ under the action $AGA^*$ if and only if $AU$ preserves the rank of $D$ under the action $AUD(AU)^*$. Therefore,
 \begin{align*}
 \mathcal{N}(G)U&=\{AU\,:\, A\in\C^{\ell\times m}, \,\rk(G)\neq\rk(AGA^*)\}\\
 &=\{B\in\C^{\ell\times m}, \rk(D)\neq\rk(BDB^*)\}=
 \mathcal{N}(D).
 \end{align*}
 Since $U$ is a unitary matrix, the mapping $A\mapsto AU$ from $\C^{\ell\times m}$ in itself preserves Lebesgue measure, implying  $|\mathcal{N}(G)|=|\mathcal{N}(D)|$. Thus, it is enough to show that $|\mathcal{N}(D)|=0$.

Let $B\in\C^{\ell\times m}$ be written by column-blocks  as $B=(B_1|B_2)$ where the columns of $B_1$ are the first $r$ columns of $B$ and the columns of $B_2$ are the last $m-r$ columns of $B$.
Then,
$$\rk(BDB^*)=\rk(BD^{1/2}(BD^{1/2})^*)=\rk(BD^{1/2})=\rk(B_1).$$
Thus, $\mathcal{N}(D)=\{B=(B_1|B_2)\in\C^{\ell\times m}\,:\, B_1\in \C^{\ell\times r}, B_2\in \C^{\ell\times m-r}, \rk(B_1)<r\}$. Since the set of matrices in $\C^{\ell\times r}$ which are not full rank has zero Lebesgue measure, the result follows.
\end{proof}

\section{Linear combination of generators of MI spaces}\label{sec-results-MI}
In the previous section we showed that almost all linear combinations of generators of a finite dimensional subspace produce a new set of vectors spanning the same subspace.
We want to study now a similar problem  but in the context of multiplicatively invariant (MI) spaces of $L^2(\Omega, \mathcal{H})$.  The concept of MI spaces was recently introduced in the general setting of $L^2(\Omega, \mathcal{H})$ in \cite{BR14} as a generalization of the very well-known doubly invariant spaces proposed by Helson in \cite{Hel64} and Srinivasan in \cite{Sri64} for $\Omega=\mathbb{T}$.
We shall prove that an analogous result to Theorem \ref{thm-ppal} can be obtained for MI spaces in   $L^2(\Omega, \mathcal{H})$.
The main difference here lies in the meaning  of the word ``generator'' which,  for MI spaces,  differs from the notion of generator for a subspace.
To properly describe and state the result we shall prove in this case, we first summarize the basic properties of MI spaces in Section \ref{sec-MI}.

\subsection{Multiplicatively invariant spaces in $L^2(\Omega, \mathcal{H})$}\label{sec-MI}
The  material we collect here is a summary of the content of \cite[Section 2]{BR14}. See \cite{BR14} for details and proofs.

Let $(\Omega, \mu)$ be a $\sigma$-finite measure space and let $\mathcal{H}$ be a separable Hilbert space.
The vector valued space $L^2(\Omega, \mathcal{H})$ is the space of measurable functions $\Phi:\Omega\to\mathcal{H}$ such that $\|\Phi\|^2=\int_{\Omega}\|\Phi(\omega)\|^2_{\mathcal{H}}\,d\mu(\omega)<+\infty$. The inner product in $L^2(\Omega, \mathcal{H})$ is given by $\langle \Phi, \Psi\rangle=\int_{\Omega}\langle \Phi(\omega), \Psi(\omega)\rangle_{\mathcal{H}}\,d\mu(\omega)$.

For defining MI spaces in $L^2(\Omega, \mathcal{H})$ we require the concept of  determining set for $L^1(\Omega)$. A set $D\subseteq L^{\infty}(\Omega)$ is said to be a {\it determining set} for $L^1(\Omega)$ if
for every $f\in L^1(\Omega)$ such that $\int_{\Omega}f(\omega)g(\omega)\,d\mu(\omega)=0\,\,\forall g\in D$, one has $f=0$.
In the setting of Helson \cite{Hel64}, a determining set is the set of exponentials with integer parameter, $D=\{e^{2\pi i k\cdot}\}_{k\in\Z}\subseteq L^{\infty}(\mathbb{T})$.

\begin{definition}\label{def-MI}
A closed subspace $M\subseteq  L^2(\Omega, \mathcal{H})$ is {\it multiplicatively  invariant} with respect to the determining set $D$ for $L^1(\Omega)$ (MI space for short) if
$$\Phi\in M\Longrightarrow g\Phi\in M, \,\, \textrm{ for any }\,\,g\in D.$$
For an at most countable (meaning finite or countable)  subset $\bm{\varPhi}\subseteq L^2(\Omega, \mathcal{H})$  define
$
M_D(\bm{\varPhi})= \overline{\mbox{span}}\{g\Phi\colon \Phi\in\bm{\varPhi}, g\in D\}.
$
The subspace $M_D(\bm{\varPhi})$ is called the multiplicatively  invariant space  generated by $\bm{\varPhi}$, and we say that $\bPhi$ is a set of generator for $M_D(\bPhi)$. When $\bm{\varPhi}$ is  finite, $M=M_D(\bm{\varPhi})$ is said to be finitely generated by $\bm{\varPhi}$. In that case, we define the {\it length} of $M$ as
$$\ell(M)= \min\{n\in\N\colon \exists \,\,\Phi_1, \cdots,\Phi_n \in
M \textrm{ with } M=M_D(\Phi_1, \ldots,\Phi_n)\}.$$
\end{definition}

One of the most important properties of MI spaces is their characterization in terms of measurable range functions.
A {\it range function} is a mapping $J:\Omega\to \{\textrm{closed subspaces of }\mathcal{H}\}$ equipped
with the orthogonal projections $P_J(\omega)$ of $\mathcal{H}$ onto $J(\omega)$. A range functions is said to be {\it measurable} if for every $a,b\in\mathcal{H}$, $\omega\mapsto\langle P_J(\omega)a,b\rangle$ is measurable as a function from $\Omega$ to $\C$.

\begin{theorem}\cite[Theorem 2.4]{BR14}\label{thm-MI-rango}
Suppose that $L^2(\Omega)$ is separable, so that $L^2(\Omega, \mathcal{H})$ is also separable.
Let $M$ be a closed subspace of $L^2(\Omega, \mathcal{H})$ and $D$ a determining set for $L^1(\Omega)$.
Then, $M$ is an MI space with respect to $D$ if and only if there exists a measurable range function $J$ such that
$$M=\{\Phi\in  L^2(\Omega, \mathcal{H})\,:\, \Phi(\omega)\in J(\omega) \textrm{ a.e. } \omega\in\Omega\}.$$
Identifying range functions that are equal almost everywhere, the correspondence between MI spaces and measurable range functions is one-to-one and onto.

Moreover, when $M=M_D(\bm{\varPhi})$ for some at most countable set $\bm{\varPhi}\subseteq  L^2(\Omega, \mathcal{H})$ the range function associated to $M$ is
$$J(\w)=\clspan\{\Phi(\w):\Phi\in\bm{\varPhi}\}, \quad \textrm{a.e. }\w\in\W.$$
\end{theorem}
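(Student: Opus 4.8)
The plan is to prove the two implications of the equivalence, then deduce the bijective correspondence and the explicit formula. The direction asserting that a measurable range function $J$ yields an MI space is the routine one: setting $M=\{\Phi\in L^2(\Omega,\mathcal{H}):\Phi(\omega)\in J(\omega)\text{ a.e.}\}$, one checks that $M$ is a subspace because each $J(\omega)$ is, that $M$ is closed because from $\Phi_n\to\Phi$ in $L^2$ one extracts a subsequence converging pointwise a.e. into the closed fibers $J(\omega)$, and that $M$ is multiplicatively invariant because for $g\in D\subseteq L^\infty(\Omega)$ and $\Phi\in M$ we have $g\Phi\in L^2$ with $g(\omega)\Phi(\omega)\in J(\omega)$ a.e. This already supplies the surjectivity half of the correspondence.

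For the converse, and its main difficulty, I would use separability of $L^2(\Omega,\mathcal{H})$ to pick a countable dense subset $\{\Phi_n\}_{n\in\N}$ of $M$ and set $J(\omega)=\clspan\{\Phi_n(\omega):n\in\N\}$. First one verifies that $J$ is a measurable range function: the projection onto a finite span $\mathrm{span}\{\Phi_1(\omega),\dots,\Phi_N(\omega)\}$ is a Borel function of the measurable Gram entries $\langle\Phi_i(\omega),\Phi_j(\omega)\rangle_{\mathcal{H}}$ through the Moore--Penrose pseudo-inverse, hence measurable, and these converge strongly to $P_J(\omega)$ as $N\to\infty$, so $\omega\mapsto\langle P_J(\omega)a,b\rangle$ is a pointwise limit of measurable functions. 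The inclusion $M\subseteq M_J$ is immediate since each $\Phi_n\in M_J$ and $M_J$ is closed. The reverse inclusion is the heart of the argument: if $\Phi\perp M$ then multiplicative invariance gives $g\Phi_n\in M$ for all $g\in D$, so $0=\langle g\Phi_n,\Phi\rangle=\int_\Omega g(\omega)h_n(\omega)\,d\mu(\omega)$ with $h_n(\omega)=\langle\Phi_n(\omega),\Phi(\omega)\rangle_{\mathcal{H}}\in L^1(\Omega)$ by Cauchy--Schwarz; since this holds for every $g\in D$ and $D$ is a determining set, $h_n=0$ a.e., and by countability $\Phi(\omega)\perp J(\omega)$ a.e. Decomposing any $\Phi\in M_J$ as $\Phi_0+\Phi_1$ with $\Phi_0\in M$ and $\Phi_1\in M^\perp$, both $\Phi$ and $\Phi_0$ lie fiberwise in $J$, forcing $\Phi_1(\omega)\in J(\omega)$ a.e., while the orthogonality just proved gives $\Phi_1(\omega)\perp J(\omega)$ a.e.; hence $\Phi_1=0$ and $\Phi\in M$, so $M=M_J$.

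It remains to prove injectivity of the correspondence and the explicit formula. If $J_1,J_2$ are measurable range functions with $M_{J_1}=M_{J_2}=M$, build $J$ from a dense $\{\Phi_n\}\subseteq M$ as above, so $M_J=M$; then $\Phi_n(\omega)\in J_1(\omega)$ a.e. gives $J\subseteq J_1$ a.e. Were $J_1\neq J$ on a set of positive measure, the measurable range function $\omega\mapsto J_1(\omega)\cap J(\omega)^\perp$, with projection $P_{J_1}(\omega)-P_J(\omega)$, would be nonzero there; choosing $a$ in a fixed countable dense subset of $\mathcal{H}$ with $(P_{J_1}-P_J)(\cdot)a$ nonzero on a positive-measure set $F$ of finite measure, the function $\chi_F(P_{J_1}-P_J)(\cdot)a$ is a nonzero element of $M=M_J$ whose fibers lie in $J(\omega)^\perp$, contradicting that its fibers lie in $J(\omega)$ a.e. Hence $J_1=J=J_2$ a.e. Finally, when $M=M_D(\bm{\varPhi})$ with $\bm{\varPhi}$ at most countable, put $\tilde J(\omega)=\clspan\{\Phi(\omega):\Phi\in\bm{\varPhi}\}$; every generator lies fiberwise in $\tilde J$, so $M\subseteq M_{\tilde J}$ and thus $J\subseteq\tilde J$ a.e., while each $\Phi\in\bm{\varPhi}\subseteq M=M_J$ satisfies $\Phi(\omega)\in J(\omega)$ a.e., giving $\tilde J\subseteq J$ a.e.; therefore $J=\tilde J$ a.e.

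The conceptual steps reduce, via the determining set, to testing the scalar functions $h_n=\langle\Phi_n(\cdot),\Phi(\cdot)\rangle$ against $D$; I expect the genuine obstacles to be the two measure-theoretic points that the separability hypotheses are designed to handle: establishing measurability of $J$ by approximating infinite-dimensional fiber projections with finite-dimensional ones built from measurable Gram matrices, and, in the injectivity step, exhibiting a nonzero $L^2$ section of a range function that is nonzero on a set of positive measure.
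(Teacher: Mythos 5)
The paper gives no proof of this statement---it is quoted from \cite[Theorem 2.4]{BR14}, and Section \ref{sec-MI} explicitly defers all details and proofs to that reference. Your argument is correct and is essentially the standard Helson-type proof that Bownik and Ross adapt: the only nontrivial direction rests on exactly the two points you isolate (measurability of $J$ built from a countable dense subset of $M$ via finite-rank projections with measurable Gram data, and the determining-set computation showing $\Phi\perp M$ forces $\Phi(\omega)\perp J(\omega)$ a.e., combined with the orthogonal decomposition of an arbitrary element of $M_J$), and your injectivity and generator-formula steps are likewise sound.
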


\subsection{Linear combinations of MI-generators.}\label{sec-MI-generators}
We can now properly state what we want to prove.
Fix $D\subseteq L^{\infty}(\Omega)$ a determining set for $L^1(\Omega)$. Let us suppose that $M$ is a finitely generated MI space with respect to $D$. This is, $M=M_D(\bm{\varPhi})$ where $\bm{\varPhi}=\{\Phi_1, \ldots, \Phi_m\}\subseteq
L^2(\Omega, \mathcal{H})$. For a number $\ell$ such that $\ell(M)\leq\ell\leq m$ we construct a new set of functions of $M$, $\bm{\varPsi}=\{\Psi_1, \ldots, \Psi_{\ell}\}$ say,  by taking linear combinations of $\{\Phi_1, \ldots, \Phi_m\}$
as we did  for the case of generators for a finite dimensional Hilbert spaces in Section \ref{sec-generators}.
More precisely, for each $1\leq i\leq \ell$, $\Psi_i=\sum_{j=1}^ma_{ij}\Phi_j$, and collecting the coefficients in a matrix $A\in\C^{\ell\times m}$, we write $\varPsi=A\varPhi^t$ where $\varPhi=(\Phi_1, \ldots, \Phi_m)$ and $\varPsi=(\Psi_1, \ldots, \Psi_{\ell})$. The question is now, which are the matrices $A\in\C^{\ell\times m}$ that transfer the property of being a generator set for $M$ from $\bm{\varPhi}$ to
$\bm{\varPsi}$. 

\begin{theorem}\label{lc-of-MI-generators}
Let $M$ be a finitely generated MI space  and $\bm{\varPhi}=\{\Phi_1,\cdots, \Phi_m\}\subseteq L^2(\Omega, \mathcal{H})$ be such that $M=M_D(\bm{\varPhi})$ where $\ell(M) \leq m$.
For $\ell(M)\leq\ell\leq m$, consider the set of matrices
$\mathcal{R}=\{A\in\C^{\ell\times m}\,:\, M=M_D(\bm{\varPsi}), \varPsi=A\varPhi^t \}.$ Then,
$\C^{\ell\times m}\setminus \mathcal{R}$ has zero Lebesgue measure.
\end{theorem}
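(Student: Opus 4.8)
The plan is to pass to fibers via the range function characterization of Theorem \ref{thm-MI-rango} and then apply the finite-dimensional rank-preservation result of Section \ref{sec-generators} pointwise in $\omega$, gluing the fiberwise conclusions together with a Fubini--Tonelli argument. Let $J$ be the measurable range function associated to $M$; by the last part of Theorem \ref{thm-MI-rango}, $J(\omega)=\clspan\{\Phi_1(\omega),\dots,\Phi_m(\omega)\}$ for a.e. $\omega$, while the MI space $M_D(\bPsi)$ has range function $J_A(\omega)=\clspan\{\Psi_1(\omega),\dots,\Psi_\ell(\omega)\}$. Since each $\Psi_i(\omega)$ is a linear combination of the $\Phi_j(\omega)$, we always have $J_A(\omega)\subseteq J(\omega)$, hence $M_D(\bPsi)\subseteq M$. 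By the one-to-one correspondence between MI spaces and measurable range functions, $A\in\mathcal{R}$ if and only if $J_A(\omega)=J(\omega)$ for a.e. $\omega$. Fixing $\omega$, the vectors $\{\Phi_j(\omega)\}_{j=1}^m$ and $\{\Psi_i(\omega)\}_{i=1}^\ell$ in $\mathcal{H}$ are linked exactly by the relation $W=AV^t$ of Section \ref{sec-generators}, so by the discussion leading to \eqref{eq:R}, $J_A(\omega)=J(\omega)$ is equivalent to $\rk\!\big(AG_{\Phi(\omega)}A^*\big)=\rk\big(G_{\Phi(\omega)}\big)$, where $G_{\Phi(\omega)}$ is the Gramian of $\{\Phi_j(\omega)\}_{j=1}^m$.

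Next I would check that Proposition \ref{prop-rank-preserving} is applicable in each fiber. Its hypothesis requires $\rk(G_{\Phi(\omega)})\le\ell$. Since $M$ admits a generating set of $\ell(M)$ functions, the range-function formula of Theorem \ref{thm-MI-rango} shows that $J(\omega)$ is spanned by $\ell(M)$ vectors for a.e. $\omega$, so $\rk(G_{\Phi(\omega)})=\dim J(\omega)\le\ell(M)\le\ell$ for a.e. $\omega$. Thus, for a.e. fixed $\omega$, Proposition \ref{prop-rank-preserving} applies to $G=G_{\Phi(\omega)}$ and tells us that the set $\mathcal{N}(G_{\Phi(\omega)})=\{A\in\C^{\ell\times m}:\rk(AG_{\Phi(\omega)}A^*)\neq\rk(G_{\Phi(\omega)})\}$ has zero Lebesgue measure in $\C^{\ell\times m}$.

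The heart of the argument, and the main obstacle, is then to upgrade the statement ``for a.e. $\omega$, almost every $A$ is good'' into the desired ``for almost every $A$, a.e. $\omega$ is good'': these are not the same, and bridging them requires integrating over the product space. Consider
$$E=\big\{(A,\omega)\in\C^{\ell\times m}\times\Omega:\ \rk(AG_{\Phi(\omega)}A^*)\neq\rk(G_{\Phi(\omega)})\big\}.$$
I claim $E$ is product-measurable: the entries of $\omega\mapsto G_{\Phi(\omega)}$ are the inner products $\langle\Phi_k(\omega),\Phi_r(\omega)\rangle_{\mathcal{H}}$, which are measurable because the $\Phi_j$ are measurable; the map $(A,\omega)\mapsto AG_{\Phi(\omega)}A^*$ is continuous in $A$ and measurable in $\omega$, hence a Carath\'eodory map and jointly measurable; and the rank is a Borel function of the matrix entries. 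With $E$ measurable and both $\Omega$ and $\C^{\ell\times m}\cong\R^{2\ell m}$ being $\sigma$-finite, Tonelli's theorem gives
$$\int_{\C^{\ell\times m}}\big|\{\omega:(A,\omega)\in E\}\big|\,dA=\int_{\Omega}\big|\{A:(A,\omega)\in E\}\big|\,d\mu(\omega).$$
By the previous paragraph the inner integrand on the right is the measure of $\mathcal{N}(G_{\Phi(\omega)})$, which vanishes for a.e. $\omega$, so the right-hand side is $0$. Hence the left-hand integrand vanishes for a.e. $A$, i.e. $|\{\omega:(A,\omega)\in E\}|=0$ for a.e. $A$. Since $A\notin\mathcal{R}$ precisely when $\{\omega:J_A(\omega)\neq J(\omega)\}=\{\omega:(A,\omega)\in E\}$ has positive measure, this shows $\C^{\ell\times m}\setminus\mathcal{R}$ is a null set.

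I expect the fiberwise reduction and the application of Proposition \ref{prop-rank-preserving} to be routine given Section \ref{sec-generators}; the delicate points are the joint measurability of $E$ (needed to legitimize Tonelli) and the careful bookkeeping that the exceptional null set of $\omega$'s --- where either $\dim J(\omega)>\ell$ or the range-function formula fails --- contributes nothing to the $\omega$-integral, so that the conclusion about a.e. $A$ survives intact.
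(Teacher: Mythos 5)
Your proof is correct and follows essentially the same route as the paper: reduce to the fiberwise rank condition $\rk(AG_{\bPhi(\w)}A^*)=\rk(G_{\bPhi(\w)})$ via the range-function characterization, apply Proposition \ref{prop-rank-preserving} in each fiber, and pass from ``a.e.\ $\w$, a.e.\ $A$'' to ``a.e.\ $A$, a.e.\ $\w$'' by a Fubini--Tonelli argument on the product-measurable bad set (the paper establishes joint measurability via minors being polynomials in the measurable Gramian entries, while you use a Carath\'eodory-type argument --- both are fine). Your explicit check that $\rk(G_{\bPhi(\w)})\le\ell(M)\le\ell$, so that Proposition \ref{prop-rank-preserving} is applicable fiberwise, is a detail the paper leaves implicit; otherwise the two arguments coincide.
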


Observe that this result is analogous to the one we proved for the case of generators for subspaces, Theorem \ref{thm-ppal}. As we mentioned before, the generator set $\bPhi$ generates $M_D(\bPhi)$ as a MI space. This fact  changes the nature of the problem and as a consequence, the proof of Theorem \ref{lc-of-MI-generators} requires more subtle techniques than those used for proving Theorem \ref{thm-ppal}.

For the proof of the above theorem we need the following known result.
\begin{lemma}\label{fubini}
 Let $(X, \mu)$ and $(Y, \nu)$ be measure spaces and  $F\subseteq X\times Y$ be a measurable set. The sections of $F$ are
 $F_x=\{y\in Y\,: (x,y)\in F\}$ and $F_y=\{x\in X\,: (x,y)\in F\}$. Then, $(\mu\times\nu)(F)=0$ if and only if
 $\nu(F_x)=0$ for $\mu$-a.e. $x\in X$ if and only if
 $\mu(F_y)=0$ for $\nu$-a.e. $y\in Y$.
\end{lemma}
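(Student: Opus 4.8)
The plan is to reduce both equivalences to a single application of Tonelli's theorem to the characteristic function of $F$. Since the statement concerns only null sets, no integrability beyond nonnegativity is required, so Tonelli (rather than full Fubini) is exactly the right tool.

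First I would recall that, because $F$ belongs to the product $\sigma$-algebra, Tonelli's theorem guarantees that for $\mu$-a.e.\ $x$ the section $F_x$ is $\nu$-measurable, that for $\nu$-a.e.\ $y$ the section $F_y$ is $\mu$-measurable, and that the (nonnegative, extended-real-valued) functions $x\mapsto \nu(F_x)$ and $y\mapsto \mu(F_y)$ are measurable. Applying Tonelli to the characteristic function $\chi_F$ of $F$ then yields
$$(\mu\times\nu)(F)=\int_{X\times Y}\chi_F\,d(\mu\times\nu)=\int_X \nu(F_x)\,d\mu(x)=\int_Y \mu(F_y)\,d\nu(y).$$
Next I would invoke the elementary principle that a nonnegative measurable function $h$ on a measure space satisfies $\int h=0$ if and only if $h=0$ almost everywhere. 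Applying this to $h(x)=\nu(F_x)$ turns the identity $(\mu\times\nu)(F)=\int_X\nu(F_x)\,d\mu(x)$ into the first equivalence, namely that $(\mu\times\nu)(F)=0$ holds if and only if $\nu(F_x)=0$ for $\mu$-a.e.\ $x\in X$; applying it to $h(y)=\mu(F_y)$ together with the identity $(\mu\times\nu)(F)=\int_Y\mu(F_y)\,d\nu(y)$ gives the second equivalence in the same way.

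The only point requiring care --- and the sole genuine obstacle --- is ensuring the hypotheses under which Tonelli's theorem applies, and in particular under which the section-measure functions are themselves measurable; this is precisely the $\sigma$-finiteness of $(X,\mu)$ and $(Y,\nu)$. In every application of this lemma in the paper the relevant factors are a $\sigma$-finite $\Omega$ and a Euclidean space $\C^{\ell\times m}$ equipped with Lebesgue measure, both of which are $\sigma$-finite, so the hypotheses are met and the argument above is complete.
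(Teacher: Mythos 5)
Your proof is correct and is the standard Tonelli argument; the paper itself offers no proof at all, stating the lemma as a ``known result,'' so there is nothing to diverge from. Your remark about $\sigma$-finiteness is well taken: the paper's statement omits this hypothesis, yet Tonelli's theorem (and even the measurability of $x\mapsto\nu(F_x)$) requires it, and you correctly verify that every application in the paper involves a $\sigma$-finite $\Omega$ and $\C^{\ell\times m}$ with Lebesgue measure, so the lemma is used only where it is valid.
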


\begin{proof}[Proof of Theorem \ref{lc-of-MI-generators}]
Along all this proof the relationship between $\bm{\varPhi}$ and $\bm{\varPsi}$ will be always  $\varPsi=A\varPhi^t$ for some matrix $A\in \C^{\ell\times m}$ so, we  will not repeat this again. We denote by $J_{\bm{\varPhi}}$ and $J_{\bm{\varPsi}}$ the measurable range functions associated to $M_D(\bm{\varPhi})$ and $M_D(\bm{\varPsi})$ respectively and for each $\w\in\W$, ${\bm{\varPhi}}(\w)=\{\Phi_1(\w), \ldots, \Phi_m(\w)\}$ and ${\bm{\varPsi}}(\w)=\{\Psi_1(\w), \ldots, \Psi_{\ell}(\w)\}$. Note that since $\varPsi=A\varPhi^t$, $\varPsi(\w)=A\varPhi(\w)^t$, where $\varPhi(\w)=(\Phi_1(\w), \ldots, \Phi_m(\w))$ and
$\varPsi(\w)=(\Psi_1(\w), \ldots, \Psi_{\ell}(\w))$. We now proceed as in \cite{BK06}.
By Theorem \ref{thm-MI-rango} and and the reasoning we used to obtain \eqref{eq:R}, we deduce that
\begin{align*}
 \mathcal{R}&=\{A\in\C^{\ell\times m}\,:\, M_D(\bm{\varPhi})=M_D(\bm{\varPsi}) \}\\
 &=\{A\in\C^{\ell\times m}\,:\, J_{\bm{\varPhi}}(\w)=J_{\bm{\varPsi}}(\w), \textrm{ a.e. } \w\in\W\}\\
 &=\{A\in\C^{\ell\times m}\,:\, S({\bm{\varPhi}}(\w))=S({\bm{\varPsi}}(\w))\textrm{ a.e. } \w\in\W\}\\
 &=\{A\in\C^{\ell\times m}\,:\, \rk(G_{\bm{\varPhi}(\w)})=\rk(AG_{\bm{\varPhi}(\w)}A^*)\textrm{ a.e. } \w\in\W\},
\end{align*}
where in the last equality  $G_{\bm{\varPhi}(\w)}$ is the Gramian associated to ${\bm{\varPhi}(\w)}$.
Since for a.e. $ \w\in\W$, $\rk(G_{\bm{\varPhi}(\w)})\geq\rk(AG_{\bm{\varPhi}(\w)}A^*)$, we then want to prove  that the set
\begin{equation}\label{null-set}
\{A\in\C^{\ell\times m}\,:\, \rk(G_{\bm{\varPhi}(\w)})>\rk(AG_{\bm{\varPhi}(\w)}A^*){\tiny\textrm{ for } \w\textrm{ belonging to a set of positive measure}}\}
\end{equation}
has  zero Lebesgue measure.

Let $\mathcal{F}=\{(\w, A)\in\W\times\C^{\ell\times m}\,:\, \rk(G_{\bm{\varPhi}(\w)})>\rk(AG_{\bm{\varPhi}(\w)}A^*)\}.$
Since for each $1\leq i\leq m$, $\Phi_i$ is a measurable function, so are the entries of $G_{\bm{\varPhi}(\w)}$.
On the other hand, the rank of any matrix is the largest of the absolute values of its minors.  Thus, since the determinant is a polynomial on the entries of the matrix, it follows that the rank of a matrix with measurable entries is a measurable function. Now, since $\mathcal{F}=f^{-1}((0, +\infty))$ where $f$ is the measurable function $f(\w, A)=\rk(G_{\bm{\varPhi}(\w)})-\rk(AG_{\bm{\varPhi}(\w)}A^*)$, it turns out that $\mathcal{F}$ is a measurable subset of $\W\times\C^{\ell\times m}$.

The sections of the $\mathcal{F}$ are denoted by $\mathcal{F}_{\w}$ and $\mathcal{F}_{A}$. By Proposition \ref{prop-rank-preserving} we know that $|\mathcal{F}_{\w}|=0$ for a.e $\w\in\W$ and hence, by Lemma \ref{fubini}
$\mu(\mathcal{F}_{A})=0$ for a.e. $A\in\C^{\ell\times m}$.
Note that the set given in \eqref{null-set} is exactly
$\{A\in\C^{\ell\times m}\,:\, \mu(\mathcal{F}_{A})>0\}$. Therefore,  it has zero Lebesgue measure.
\end{proof}

\subsection{Linear combinations preserving uniform frames.}\label{sec-preserving-uniform-frame}
As we mentioned in the introduction, we want to give a unified treatment for the problem of when linear combinations preserve generators and frame generators in systems of translates, where the ``systems of translates'' are considered   in different contexts. This is why we work at the level of the vector valued functions. For addressing  the frame case, we need to introduce the following  definition which, at this point,  may seem a bit  artificial. However, we shall see that it has complete sense in each of the different contexts we want to consider.

\begin{definition}\label{def-uniform-frames}
 Let $\bm{\varPhi}\subseteq L^2(\Omega, \mathcal{H})$ be an at most countable set  and let $J$ be the  measurable range function defined as $J(\w)=\clspan\{\Phi(\w):\Phi\in\bm{\varPhi}\}, \quad \textrm{a.e. }\w\in\W$. We say that $\bm{\varPhi}$ is a {\it uniform frame} for $J$ if there exist constant $0<\alpha\leq \beta$ such that, for a.e. $\w\in\W$,  the set $\{\Phi(\w):\,\Phi\in\bPhi\}$ is a frame for $J(\w)$ with frame bounds $\alpha$ and $\beta$.
\end{definition}

Fix $D\subseteq L^{\infty}(\W)$ a determining set for $L^1(\W)$ and  suppose that $\bPhi$ is a finite set of functions in $L^2(\Omega, \mathcal{H})$ such that it is a uniform frame for $J$ where $J$ is the measurable range function associated to $M=M_D(\bm{\varPhi})$.
Then, Theorem \ref{lc-of-MI-generators} tells us that almost every linear combination of the functions in $\bPhi$ produces a new set of generator $\bPsi$ of $M$. In particular, this is saying us that for a.e. $\w\in\W$, $\bPsi(\w)$ is a new set of generators for $J(\w)$. Thus, we are interested in knowing  which are the linear combinations that also preserve uniform frames.  This is, if $A$ is such that $\varPsi=A\varPhi^t$, what is the property $A$ must satisfy so that $\bPsi$ is a uniform frame for $J$? We shall answer this question by completely characterizing matrices $A$ that preserve uniform frames it terms of angles between subspaces.
To this end, we first recall the notion of Friedrichs angle, \cite{Deu95, HJ95, Kat76}.

Let $S, T\neq \{0\}$ be subspaces of $\C^n$. The {\it Friedrichs  angle between $S$ and $T$}
is the angle in $[0, \frac{\pi}{2}]$ whose cosine is defined by
$$
{\bm{\mathcal{G}}}[S,T]=\sup\{|\langle x,y\rangle|:\, x\in S\cap (S\cap T)^{\perp},\, \|x\|=1, \, y\in T\cap (S\cap T)^{\perp},\, \|y\|=1\}.
$$
We define ${\bm{\mathcal{G}}}[S,T]=0$ if $S=\{0\}$, $T=\{0\}$, $S\subseteq T$ or $T\subseteq S$.
As usual, the sine of the Friedrichs angle is defined as $\s\,[S,T]=\sqrt{1-{\bm{\mathcal{G}}}[S,T]^2}.$

We can now state the characterization of matrices that preserve uniform frames.

\begin{theorem}\label{thm-frames}
Let $ \bPhi=\{\Phi_1,\ldots, \Phi_m\}\subseteq L^2(\Omega, \mathcal{H})$ be a uniform frame for $J$ where $J$ is the measurable range function associated to $M=M_D(\bPhi)$ and suppose that $\ell(M)\leq\ell\leq m$.
Let $A\in\C^{\ell\times m}$ be a matrix and consider $\bPsi=\{\Psi_1,\ldots, \Psi_{\ell}\}$ where $\varPsi=A\varPhi^t$. Then, $\bPsi$ is a uniform
frame for $J$ if and only if $A$ satisfies the following two conditions
\begin{enumerate}
\item $A\in\mathcal{R}$ where $\mathcal{R}$ is as in Theorem \ref{lc-of-MI-generators}.
\item There exists $\delta>0$ such that $\s\,[Ker(A), Im(G_{\bPhi(\w)})]\ge \delta$ for a.e. $\w\in\W$.
\end{enumerate}
\end{theorem}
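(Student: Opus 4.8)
The plan is to reduce the uniform-frame property of $\bPsi$ to a quantitative statement about the smallest nonzero eigenvalue of the pointwise Gramian $G_{\bPsi(\w)}=AG_{\bPhi(\w)}A^{*}$ (Lemma \ref{lemma-gramian}), and then to show that this eigenvalue is, up to fixed constants, comparable to $\alpha\,\s[Ker(A),Im(G_{\bPhi(\w)})]^{2}$. Write $G(\w)=G_{\bPhi(\w)}$ and let $\lambda^{+}_{\min}(\cdot)$ denote the smallest nonzero eigenvalue. By hypothesis $\bPhi$ is a uniform frame, so $\Sigma(G(\w))\subseteq\{0\}\cup[\alpha,\beta]$ for a.e.\ $\w$. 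Since $\bPsi(\w)$ is automatically a frame for $S(\bPsi(\w))$, the eigenvalue characterization of frame bounds recalled in the Notation subsection shows that $\bPsi$ is a uniform frame for $J$ precisely when (a) $S(\bPsi(\w))=J(\w)$ a.e., and (b) $\inf_{\w}\lambda^{+}_{\min}(AG(\w)A^{*})>0$ together with $\sup_{\w}\lambda_{\max}(AG(\w)A^{*})<\infty$. Condition (a) is exactly $A\in\mathcal{R}$ by the description of $\mathcal{R}$ obtained in the proof of Theorem \ref{lc-of-MI-generators}, which is condition (1). The upper bound is free, since $\lambda_{\max}(AG(\w)A^{*})\le\|A\|^{2}\|G(\w)\|\le\|A\|^{2}\beta$. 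Hence everything comes down to controlling the smallest nonzero eigenvalue uniformly, which I claim is equivalent to condition (2).

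The core is the two-sided estimate, valid for a.e.\ $\w$ satisfying condition (1),
\[
\sigma^{2}\,\alpha\;\s[Ker(A),Im(G(\w))]^{2}\;\le\;\lambda^{+}_{\min}(AG(\w)A^{*})\;\le\;\|A\|^{2}\,\beta\;\s[Ker(A),Im(G(\w))]^{2},
\]
where $\sigma>0$ is the smallest positive singular value of $A$. To prove it, factor $G(\w)=G(\w)^{1/2}G(\w)^{1/2}$ and set $B=AG(\w)^{1/2}$; the nonzero eigenvalues of $AG(\w)A^{*}=BB^{*}$ coincide with the squares of the nonzero singular values of $B$, so $\lambda^{+}_{\min}(AG(\w)A^{*})=\min\{\|AG(\w)^{1/2}z\|^{2}:z\in(\ker B)^{\perp},\ \|z\|=1\}$. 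Under condition (1) one has $Ker(A)\cap Im(G(\w))=\{0\}$, and a short computation gives $\ker B=\ker G(\w)$, so $(\ker B)^{\perp}=Im(G(\w))$. The key geometric fact is that, because $Ker(A)\cap Im(G(\w))=\{0\}$, the Friedrichs angle between $Ker(A)$ and $Im(G(\w))$ equals the minimal (Dixmier) angle, whence
\[
\s[Ker(A),Im(G(\w))]=\inf_{y\in Im(G(\w)),\,\|y\|=1}\|P_{Ker(A)^{\perp}}\,y\|.
\]
Combining $\|Ay\|=\|A\,P_{Ker(A)^{\perp}}y\|$ with the singular-value bounds $\sigma\|P_{Ker(A)^{\perp}}y\|\le\|Ay\|\le\|A\|\,\|P_{Ker(A)^{\perp}}y\|$ and with $\alpha\|z\|^{2}\le\|G(\w)^{1/2}z\|^{2}\le\beta\|z\|^{2}$ on $Im(G(\w))$, both inequalities follow: the lower one by using $\|P_{Ker(A)^{\perp}}y\|\ge\s[Ker(A),Im(G(\w))]\,\|y\|$ for every $y\in Im(G(\w))$, and the upper one by testing the Rayleigh quotient on the unit vector of $Im(G(\w))$ attaining the infimum above.

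With the estimate in hand both implications are immediate. If (1) and (2) hold, the lower bound gives $\lambda^{+}_{\min}(AG(\w)A^{*})\ge\sigma^{2}\alpha\delta^{2}>0$ a.e., which together with the free upper bound makes $\bPsi$ a uniform frame for $J$. Conversely, if $\bPsi$ is a uniform frame with lower bound $\alpha'$, then (1) holds as noted, and the upper bound forces $\alpha'\le\|A\|^{2}\beta\,\s[Ker(A),Im(G(\w))]^{2}$ a.e., i.e.\ $\s[Ker(A),Im(G(\w))]\ge(\alpha'/(\|A\|^{2}\beta))^{1/2}=:\delta>0$ a.e., giving (2). Measurability of $\w\mapsto\s[Ker(A),Im(G(\w))]$, needed to make the a.e.\ statements meaningful, is inherited from the measurability of the entries of $G(\w)$ and hence of the projection onto $Im(G(\w))$.

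\textbf{Main obstacle.} The delicate point is the geometric identity that, exactly when $Ker(A)\cap Im(G(\w))=\{0\}$, the sine of the Friedrichs angle equals $\inf_{y\in Im(G(\w)),\,\|y\|=1}\mathrm{dist}(y,Ker(A))$, together with its translation into a uniform lower bound for $\lambda^{+}_{\min}(AG(\w)A^{*})$ through the factorization $B=AG(\w)^{1/2}$. Keeping all constants uniform in $\w$, so that the single $\delta$ of condition (2) suffices, is what makes the equivalence go through.
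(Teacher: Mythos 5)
Your proposal is correct, and its overall skeleton coincides with the paper's: reduce the uniform frame property to the spectral condition $\Sigma(G_{\bPsi(\w)})\subseteq\{0\}\cup[\alpha',\beta']$ via $G_{\bPsi(\w)}=AG_{\bPhi(\w)}A^*$, identify condition (1) with the pointwise equality $S(\bPsi(\w))=J(\w)$, observe that the upper frame bound is automatic, and tie the lower bound to $\s[Ker(A),Im(G_{\bPhi(\w)})]$. The genuine difference is in how the key eigenvalue--angle estimate is obtained. The paper treats it as a black box, citing \cite[Proposition 3.3]{CMP14} (an adaptation of Antezana--Corach--Ruiz--Stojanoff) for the two inequalities $\lambda_{-}(AGA^*)\le\|A\|^2\|G\|\,\s[Ker(A),Im(G)]$ and $\lambda_{-}(AGA^*)\ge\sigma(A)^2\lambda_{-}(G)\,\s[Ker(A),Im(G)]^2$. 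You instead prove a self-contained two-sided estimate by factoring $B=AG(\w)^{1/2}$, checking that under condition (1) one has $Ker(A)\cap Im(G(\w))=\{0\}$, hence $\ker B=\ker G(\w)$ and the Friedrichs angle reduces to the Dixmier angle, for which $\s[Ker(A),Im(G(\w))]=\inf\{\|P_{Ker(A)^{\perp}}y\|:y\in Im(G(\w)),\|y\|=1\}$; the singular-value bounds for $A$ on $Ker(A)^{\perp}$ and for $G(\w)^{1/2}$ on $Im(G(\w))$ then give both directions. This buys a self-contained argument and in fact a slightly sharper upper bound ($\s^2$ in place of $\s$), at the cost of redoing the work of the cited proposition; either version of the upper bound suffices for the converse implication. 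Two small points worth polishing: the test vector realizing the upper bound is not the minimizing unit vector $y_0\in Im(G(\w))$ itself but the normalization of $G(\w)^{-1/2}y_0$ (using $\|G(\w)^{-1/2}y_0\|\ge\beta^{-1/2}$), and in the converse direction you should make explicit, as you implicitly do, that condition (1) is established first so that the angle identity is available at a.e.\ $\w$.
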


The proof of Theorem \ref{thm-frames} is based in the fact that $\bPhi$ is a uniform frame with frame bounds $\alpha$ and $\beta$ for $J$ if and only if $\Sigma(G_{\bPhi(\w)})\subseteq [\alpha, \beta]\cup\{0\}$ for a.e. $\w\in\W$. Therefore, the task is to prove that conditions
$(1)$ and $(2)$ of Theorem \ref{thm-frames} guarantee that the positive eigenvalues of $AG_{\bm{\varPhi}(\w)}A^*$ are uniformly bounded. This can be done using \cite[Proposition 3.3]{CMP14}, which is an adaptation of a result on singular values of composition of operator of Antezana et al. \cite{ACRS05}.
Having at hand these results, the complete proof of Theorem \ref{thm-frames} is a  readily adaptation of the proof of Theorem 4.4  in \cite{CMP14}.
For the convenience of the reader we provide it here.
\begin{proof}[Proof of Theorem \ref{thm-frames}]
 For a matrix $G$ such that it is positive-semidefinite and $G=G^*$ we denote by $\lambda_{-}(G)$ its  smallest non-zero eigenvalue. For  any matrix $B$ we denote by $\sigma(B)$ the smallest non-zero singular value of $B$.

 Let $0<\alpha\leq\beta$ be the frame bounds of $\bPhi$. Then,
since $\Sigma(G_{\bPhi(\w)})\subseteq [\alpha, \beta]\cup\{0\}$ for a.e. $\w\in\W$, we have that $\alpha\leq \lambda_{-}(G_{\bPhi(\w)})$ and $\|G_{\bPhi(\w)}\|\leq\beta$ for a.e. $\w\in\W$.

Suppose first that $\bPsi$ is a uniform frame for $J$ with frame bounds $0<\alpha'\leq\beta'$.
In particular, since correspondence between MI spaces and range functions is one-to-one and onto, $\bPsi$ is a generator set for $M=M_D(\bPhi)$ and then $A\in\mathcal{R}$. Thus, we are under hypotheses of \cite[Proposition 3.3]{CMP14} and so
\begin{align*}
\lambda_{-}(G_{\bPsi(\w)})= \lambda_{-}(AG_{\bPhi(\w)}A^*)&\le \|A\|^2\|G_{\bPhi(\w)}\| \,\s[Ker(A), Im(G_{\bPhi(\w)})]\\
&\leq\|A\|^2 \beta\,\s[Ker(A), Im(G_{\bPhi(\w)})].
\end{align*}
Thus, $\alpha'\leq\|A\|^2 \beta\,\s[Ker(A), Im(G_{\bPhi(\w)})]$ and condition $(2)$ follows with
$\delta=\frac{\alpha'}{\|A\|^2 \beta}$.

Suppose now that $(1)$ and $(2)$ are satisfied for some matrix $A$. Then, we apply again \cite[Proposition 3.3]{CMP14} to get
\begin{equation}\label{eq:eigenvalue}
\lambda_{-}(AG_{\bPhi(\w)}A^*)\ge \sigma(A)^2\lambda_{-}(G_{\bPhi(\w)})\,\s[Ker(A), Im(G_{\bPhi(\w)})]^2\ge \sigma(A)^2 \alpha \delta^2,
\end{equation} for a.e. $\w\in\W$.

Note that, $\|G_{\bPsi(\w)}\|= \|AG_{\bPhi(\w)}A^*\|\le \|A\|^2\|G_{\bPhi(\w)}\|\le \|A\|^2\beta$ and hence the
eigenvalues of $G_{\bPsi(\w)}$ are bounded above by $\|A\|^2\beta$.
Combining this fact together with \eqref{eq:eigenvalue} we obtain that $\Sigma(G_{\bPsi(\w)})\subseteq \big[ \,\sigma(A)^2 \alpha \delta^2, \|A\|^2\beta\,\big] \cup\{0\}$ for a.e. $\w\in\W$ and then $\bPsi$ is a uniform frame for $J$.
\end{proof}

When the new set of generators has exactly $\ell(M)$ elements the following theorem  can be shown. For its proof see \cite[Theorem 4.7]{CMP14}.

\begin{theorem}\label{thm-Radu}
Let $ \bPhi=\{\Phi_1,\ldots, \Phi_m\}\subseteq L^2(\Omega, \mathcal{H})$ be a uniform frame for $J$ where $J$ is the measurable range function associated to $M=M_D(\bPhi)$ and let $\ell(M)=\ell\leq m$.
Let $A\in\C^{\ell\times m}$ be a matrix and consider $\bPsi=\{\Psi_1,\ldots, \Psi_{\ell}\}$ where $\varPsi=A\varPhi^t$.
Then, $\bPsi$ is a uniform
frame for $J$ if and only if  $AA^*$ is invertible and
\begin{equation*}
 \esssup_{\w\in \W} \|(I_m-A^*(AA^*)^{-1}A)G_{\bPhi(\w)}G_{\bPhi(\w)}^{\dagger}\|<1.
\end{equation*}
Here, $I_m$ is the identity in $\C^{m\times m}$ and $G_{\bPhi(\w)}^{\dagger}$ is the Moore-Penrose pseudoinverse of
$G_{\bPhi(\w)}$.
\end{theorem}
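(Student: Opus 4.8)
The plan is to derive Theorem \ref{thm-Radu} from the already established characterization in Theorem \ref{thm-frames}, specializing to the case $\ell=\ell(M)$ and translating its two conditions into the single invertibility-plus-norm condition stated here. The dictionary I will use is that, once $AA^*$ is invertible, $I_m-A^*(AA^*)^{-1}A$ is exactly the orthogonal projection $P_{Ker(A)}$ of $\C^m$ onto $Ker(A)$, while $G_{\bPhi(\w)}G_{\bPhi(\w)}^{\dagger}$ is the orthogonal projection $P_{Im(G_{\bPhi(\w)})}$ onto $Im(G_{\bPhi(\w)})$ (standard facts for a full-row-rank matrix and for the Moore--Penrose pseudoinverse of a self-adjoint matrix). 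Hence the quantity in the statement equals $\|P_{Ker(A)}P_{Im(G_{\bPhi(\w)})}\|$, and the whole theorem becomes a comparison between this product of projections and the Friedrichs angle appearing in Theorem \ref{thm-frames}.

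The first step is to re-express condition $(1)$ of Theorem \ref{thm-frames}, namely $A\in\mathcal{R}$, as a transversality condition. Writing $G=G_{\bPhi(\w)}$ and $r(\w)=\rk(G)=\dim J(\w)$, I will use $G=G^{1/2}G^{1/2}$ to obtain $\rk(AGA^*)=\rk(AG^{1/2})=\dim\big(A(Im(G))\big)=r(\w)-\dim\big(Ker(A)\cap Im(G)\big)$, the last equality by rank--nullity for the restriction of $A$ to $Im(G)$. Consequently $A\in\mathcal{R}$ (i.e. $\rk(AGA^*)=r(\w)$ a.e.) holds if and only if $Ker(A)\cap Im(G_{\bPhi(\w)})=\{0\}$ for a.e. $\w\in\W$. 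The second step uses the hypothesis $\ell=\ell(M)$ through the standard length formula $\ell(M)=\esssup_{\w\in\W}\dim J(\w)$ (see \cite{BR14}): since $r(\w)$ is integer valued, there is a set of positive measure on which $\dim Im(G_{\bPhi(\w)})=\ell$, and on that set the transversality $Ker(A)\cap Im(G_{\bPhi(\w)})=\{0\}$ forces $\dim Ker(A)+\ell\le m$, hence $\rk(A)\ge\ell$ and therefore $\rk(A)=\ell$. Thus, precisely when $\ell=\ell(M)$, condition $(1)$ already guarantees that $A$ has full row rank, i.e. that $AA^*$ is invertible.

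It then remains to translate condition $(2)$. Under condition $(1)$ one has $Ker(A)\cap Im(G_{\bPhi(\w)})=\{0\}$, so $(Ker(A)\cap Im(G_{\bPhi(\w)}))^{\perp}=\C^m$ and, by the very definition of $\bm{\mathcal{G}}$, the Friedrichs cosine $\bm{\mathcal{G}}[Ker(A),Im(G_{\bPhi(\w)})]$ reduces to $\sup\{|\langle x,y\rangle|:x\in Ker(A),\,y\in Im(G_{\bPhi(\w)}),\,\|x\|=\|y\|=1\}=\|P_{Ker(A)}P_{Im(G_{\bPhi(\w)})}\|$, the last equality being the standard formula for the norm of a product of orthogonal projections (see \cite{Deu95}); recall also that in finite dimensions $\|P_SP_T\|<1$ is equivalent to $S\cap T=\{0\}$. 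Assembling the pieces gives both implications. For the forward direction, if $\bPsi$ is a uniform frame then conditions $(1)$ and $(2)$ hold by Theorem \ref{thm-frames}; condition $(1)$ yields invertibility of $AA^*$ as above, and condition $(2)$, $\s[Ker(A),Im(G_{\bPhi(\w)})]\ge\delta$, becomes $\esssup_{\w}\|P_{Ker(A)}P_{Im(G_{\bPhi(\w)})}\|\le\sqrt{1-\delta^2}<1$ since $\s^2=1-\bm{\mathcal{G}}^2$. For the converse, $AA^*$ invertible together with $\esssup_{\w}\|P_{Ker(A)}P_{Im(G_{\bPhi(\w)})}\|<1$ forces $Ker(A)\cap Im(G_{\bPhi(\w)})=\{0\}$ a.e. (hence condition $(1)$) and yields condition $(2)$ with $\delta=\sqrt{1-(\esssup_{\w}\|P_{Ker(A)}P_{Im(G_{\bPhi(\w)})}\|)^2}>0$, so Theorem \ref{thm-frames} applies and $\bPsi$ is a uniform frame. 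The main obstacle I anticipate is the bookkeeping of the second step: one must verify that it is the equality $\ell=\ell(M)$ (and not merely $\ell\ge\ell(M)$) that collapses condition $(1)$ into full row rank of $A$, which is exactly what makes the pseudoinverse identity $I_m-A^*(AA^*)^{-1}A=P_{Ker(A)}$ available; a secondary point is checking measurability of $\w\mapsto\|P_{Ker(A)}P_{Im(G_{\bPhi(\w)})}\|$, which follows from the measurability of the entries of $G_{\bPhi(\w)}$ already established in the proof of Theorem \ref{lc-of-MI-generators}.
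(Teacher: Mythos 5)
Your argument is correct and complete. Note that the paper itself does not prove Theorem \ref{thm-Radu}: it only refers the reader to \cite[Theorem 4.7]{CMP14}, so what you have written is a self-contained derivation from Theorem \ref{thm-frames}, in the same spirit as the cited proof. The key identifications all check out: once $AA^*$ is invertible, $I_m-A^*(AA^*)^{-1}A$ is the orthogonal projection onto $Ker(A)$ and $G_{\bPhi(\w)}G_{\bPhi(\w)}^{\dagger}$ is the orthogonal projection onto $Im(G_{\bPhi(\w)})$; condition $(1)$ of Theorem \ref{thm-frames} is equivalent, via $\rk(AG_{\bPhi(\w)}A^*)=\rk(AG_{\bPhi(\w)}^{1/2})$, to the transversality $Ker(A)\cap Im(G_{\bPhi(\w)})=\{0\}$ a.e.; and on that event the Friedrichs cosine coincides with $\|P_{Ker(A)}P_{Im(G_{\bPhi(\w)})}\|$, which is exactly why the single condition $\esssup_{\w}\|\cdot\|<1$ absorbs both $(1)$ and $(2)$. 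Two points deserve attention. First, the only external fact you invoke that the paper never states is the length formula $\ell(M)=\esssup_{\w}\dim J(\w)$; it is true (it follows from the decomposition of MI spaces into principal ones in \cite{BR14}), but you can bypass it entirely: if $\rk(A)=k<\ell$, factor $A=BC$ with $B\in\C^{\ell\times k}$ and $C\in\C^{k\times m}$; then $M=M_D(\bPsi)\subseteq M_D(\bm{\varXi})\subseteq M$ for the $k$-element set $\bm{\varXi}$ given by $\varXi=C\varPhi^t$, exhibiting a generating set with $k<\ell(M)$ elements, a contradiction. This derives the full row rank of $A$ from condition $(1)$ using only the definition of $\ell(M)$. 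Second, for the essential supremum to be meaningful you should record that $\w\mapsto G_{\bPhi(\w)}^{\dagger}$ is measurable even though the pseudoinverse is not a continuous function of the matrix; this follows, for instance, from $G^{\dagger}=\lim_{\epsilon\to 0^{+}}(G^{2}+\epsilon I_m)^{-1}G$ for positive-semidefinite self-adjoint $G$, a pointwise limit of continuous functions of the (measurable) entries of $G_{\bPhi(\w)}$.
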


\begin{remark}
 It might be the case that condition $(2)$ in Theorem \ref{thm-frames} is not satisfied for any matrix $A$. An example of this situation is given in \cite[Example 4.12]{CMP14} for the case of system of translates in $\R^d$ but it can be easily adapted to the setting of MI spaces. Indeed. In $L^2((-1/2, 1/2]^2, \ell^2(\Z^2))$ consider $\Phi_1$ and $\Phi_2$ the vector valued functions given by $\Phi_1(\w_1,\w_2)=-\sin(2\pi\w_1)e_o$ and  $\Phi_1(\w_1,\w_2)=e^{2\pi i\w_2}\cos(2\pi\w_1)e_o$
 where $e_0$ is the sequence in $\ell^2(\Z^2)$ that takes the value $1$ at $(0,0)$ and $0$ otherwise. As a determining set take $D=\{e^{2\pi i \langle (k,j), \cdot\rangle}\}_{(k,j)\in\Z^2}$. Then, for $M_D(\Phi_2, \Phi_2)$ there is no matrix satisfying condition $(2)$ in Theorem \ref{thm-frames}. See \cite[Example 4.12]{CMP14} for details.
\end{remark}

\section{Application to systems of translates}\label{sec-applications}
In this section we show how the previous results can be applied to systems of translates.
As we will see,
there exists a  connection between systems of translates and vector valued functions which of course depends on the context where the systems of translates are considered. The link is  what we call {\it fiberization isometry}.

\subsection{Systems of translates on LCA groups}
Here we work with systems of translates in the context of locally compact abelian groups. Given $G$ a second countable  LCA group written additively, we consider translates of functions in $L^2(G)$ along a subgroup $H\subseteq G$ such that $G/H$ is compact.
A closed subspace $V\subseteq L^2(G)$ is said to be {\it $H$-invariant} (or invariant under translations in $H$) if for every $f\in V$, $T_hf\in V$ for all $h\in H$ where $T_h$ denotes the translation by $h$, i.e $T_hf(x)=f(x-h)$.
Subspaces that are $H$-invariant  were characterized using range functions and fiberization techniques in \cite{CP10, KR10} when $H$ is discrete.
Recently in \cite{BR14}, a similar characterization was obtained only assuming that $G/H$ is compact (i.e $H$ not necessarily discrete). An important point to get these characterizations is to see the space $L^2(G)$
as a vector valued space of the form $L^2(\W, \mathcal{H})$ for some particular choices of $\W$ and $\mathcal{H}$. Now we briefly describe how to do this.

Let $\widehat{G}$ be the dual group of $G$, that is, the set of continuous characters on $G$. For $x\in G$ and $\g\in\widehat{G}$ we use the notation $(x,\g)$ for the complex value that $\g$ takes at $x$. For any subgroup $H\subseteq G$, $H^*$ the {\it annihilator} of $H$
is the subgroup of $\widehat{G}$, $H^*=\{\g\in\widehat{G}:\, (h,\g)=1, \forall\, h\in H\}$.
Let us assume from now on that $H$ is a {\it co-compact} subgroup of $G$, this is, $G/H$ is compact.
Then, by the duality theorem \cite[Lemma 2.1.3]{Rud62},  it follows that  $H^*$ is discrete.
Now fix $\W\subseteq \widehat{G}$ a measurable section of the quotient $\widehat{G}/H^*$ whose existence is a consequence of \cite[Lemma 1.1]{Mac52}.
When the Haar measures of the groups involved here are appropriately chosen,   the following result shows that $ L^2(G)$ is isometrically isomorphic to the vector valued space $L^2(\W, \ell^2(H^*))$.  For its proof see \cite[Proposition 3.3]{CP10} and \cite[Proposition 3.7]{BR14}.

\begin{proposition}\label{prop-fiber}
 The fiberization mapping $\T:L^2(G)\to L^2(\W, \ell^2(H^*))$ defined by
 $$\T f(\w)=\{\widehat{f}(\w+\delta)\}_{\delta\in H^*}$$
 is an isometric isomorphism and it satisfies
 $\T T_hf(\w)=(-h, w)\T f(\w)$ for all $f\in L^2(G)$ and all $h\in H$.
 Here, $\widehat{f}$ denotes the Fourier transform of $f$.
\end{proposition}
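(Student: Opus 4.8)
The plan is to factor the fiberization $\T$ as the composition of the Plancherel transform on $L^2(G)$ with a periodization isometry that exploits the tiling of $\widehat{G}$ by the $H^*$-translates of the section $\W$. First I would invoke the Plancherel theorem: with the Haar measures on $G$ and $\widehat{G}$ chosen to be dual to one another, the Fourier transform $\mathcal{F}\colon L^2(G)\to L^2(\widehat{G})$, $\mathcal{F}f=\widehat{f}$, is an isometric isomorphism. It therefore suffices to produce an isometric isomorphism $R\colon L^2(\widehat{G})\to L^2(\W, \ell^2(H^*))$ with $R\phi(\w)=\{\phi(\w+\delta)\}_{\delta\in H^*}$; for then $\T=R\circ\mathcal{F}$ is an isometric isomorphism and $\T f(\w)=\{\widehat{f}(\w+\delta)\}_{\delta\in H^*}$ by construction. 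Second countability of $G$ guarantees that all the $L^2$ spaces involved are separable, so no set-theoretic subtleties arise.

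The key structural fact I would establish next is that, since $\W$ is a measurable section of $\widehat{G}/H^*$ and $H^*$ is discrete (hence countable), the family $\{\W+\delta\}_{\delta\in H^*}$ is a measurable partition of $\widehat{G}$ up to a set of Haar measure zero. Granting this tiling, the isometry of $R$ is a direct computation: for $\phi\in L^2(\widehat{G})$,
\begin{align*}
\|R\phi\|^2_{L^2(\W, \ell^2(H^*))}=\int_{\W}\sum_{\delta\in H^*}|\phi(\w+\delta)|^2\,d\w=\sum_{\delta\in H^*}\int_{\W+\delta}|\phi(\eta)|^2\,d\eta=\int_{\widehat{G}}|\phi(\eta)|^2\,d\eta=\|\phi\|^2,
\end{align*}
where the interchange of sum and integral is justified by Tonelli's theorem (the integrand being nonnegative), the middle equality uses translation invariance of the Haar measure, and the last uses the disjoint tiling. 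Surjectivity follows by reversing the construction: given $c\in L^2(\W,\ell^2(H^*))$ with $\w\mapsto (c_\delta(\w))_{\delta\in H^*}$, define $\phi$ on $\widehat{G}$ by $\phi(\w+\delta)=c_\delta(\w)$ using the a.e.\ unique decomposition $\eta=\w+\delta$ with $\w\in\W$ and $\delta\in H^*$; the same computation shows $\phi\in L^2(\widehat{G})$ and $R\phi=c$. Hence $R$ is a surjective isometry, that is, an isometric isomorphism.

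To verify the intertwining relation I would use the standard transformation law of the Fourier transform under translation. With the pairing convention $(x,\g)$ and $\widehat{f}(\g)=\int_G f(x)\,\overline{(x,\g)}\,dx$, translation invariance of Haar measure and multiplicativity of characters give $\widehat{T_hf}(\g)=\overline{(h,\g)}\,\widehat{f}(\g)=(-h,\g)\,\widehat{f}(\g)$, using that characters are unimodular so $(h,\g)^{-1}=(-h,\g)$. Evaluating at $\g=\w+\delta$ and invoking the defining property of the annihilator, namely $(h,\delta)=1$ for every $h\in H$ and $\delta\in H^*$, the factor $(-h,\w+\delta)=(-h,\w)(-h,\delta)$ collapses to $(-h,\w)$. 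Therefore
\begin{align*}
\T T_hf(\w)=\{(-h,\w+\delta)\,\widehat{f}(\w+\delta)\}_{\delta\in H^*}=(-h,\w)\,\{\widehat{f}(\w+\delta)\}_{\delta\in H^*}=(-h,\w)\,\T f(\w),
\end{align*}
as claimed.

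The main obstacle is the measure-theoretic content of the middle paragraph: carefully establishing that $\{\W+\delta\}_{\delta\in H^*}$ tiles $\widehat{G}$ up to a null set, and pinning down the normalization of the Haar measures so that the isometry constant is exactly $1$. This is precisely where the hypotheses enter, through the Weil quotient integration formula relating the Haar measures on $\widehat{G}$, on the discrete group $H^*$, and on $\widehat{G}/H^*\cong\W$, and through the existence of a measurable section (Mackey). Once the correct normalization and the tiling are in place, both the isometry and the surjectivity of $R$ reduce to Tonelli's theorem, and the remaining intertwining computation is purely formal.
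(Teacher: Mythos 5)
Your argument is correct and is essentially the standard proof: the paper itself gives no proof of Proposition \ref{prop-fiber} but defers to \cite[Proposition 3.3]{CP10} and \cite[Proposition 3.7]{BR14}, where the map is likewise factored as the Plancherel transform followed by the periodization isometry induced by the a.e.\ tiling $\widehat{G}=\bigcup_{\delta\in H^*}(\W+\delta)$, with the intertwining relation reduced to $\widehat{T_hf}=(-h,\cdot)\widehat{f}$ and the annihilator identity $(h,\delta)=1$. The one point you rightly flag as delicate --- the Weil-formula normalization of the Haar measures on $\widehat{G}$, $H^*$ and $\widehat{G}/H^*\cong\W$ making the isometry constant exactly $1$ --- is precisely the hypothesis the paper absorbs into the phrase ``when the Haar measures \ldots are appropriately chosen,'' so nothing is missing.
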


The fiberization isometry of Proposition \ref{prop-fiber} allows us to see $L^2(G)$ as the vector valued space
$ L^2(\W, \ell^2(H^*))$.
Under this isometry,  $H$-invariant spaces of $L^2(G)$ exactly correspond with MI spaces of $ L^2(\W, \ell^2(H^*))$. Let us explain this correspondence in detail.
The determining set behind the notion of MI spaces in $ L^2(\W, \ell^2(H^*))$ is the set of functions
$D=\{(h, \cdot)\chi_{\W}(\cdot)\}_{h\in H}$, \cite[Corollary 3.6]{BR14}.
Thus, by Proposition \ref{prop-fiber}, one has that $V\subseteq L^2(G)$ is a $H$-invariant space if and only if $M=\T V$ is a MI space with respect to $D$. Therefore, we can also identify $H$-invariant spaces with measurable range functions as it was shown in \cite[Theorem 3.10]{CP10} and \cite[Theorem 3.8]{BR14}:
\begin{theorem}\label{thm-characterization-LCA}
Let $V \subseteq  L^2(G)$  be a closed subspace and $\T$ the mapping  defined in Proposition \ref{prop-fiber}.
Then, $V$ is $H$-invariant if and only if there exists a measurable range function $J$ such that
$$V=\{f\in L^2(G): \T f(\w)\in J(\w) \textrm{ for a.e. } \w\in\W\}.$$
Once one identifies range functions that are equal almost everywhere, the correspondence between measurable range functions and $H$-invariant spaces is one-to-one and onto.
When $V=\clspan\{T_h\varphi: h\in H, \varphi\in\A\}$ for an at most countable set $\A\subseteq  L^2(G)$, the measurable range function associated to $V$ is given by
$$J(\w)=\clspan\{\T\varphi(\w):\, \varphi\in\A\}, \quad \textrm{a.e. } \w\in\W.$$
\end{theorem}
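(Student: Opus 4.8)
The plan is to prove the theorem by transferring the abstract characterization of MI spaces, Theorem~\ref{thm-MI-rango}, through the fiberization isometry $\T$ of Proposition~\ref{prop-fiber}. The key bridge, already recorded above via \cite[Corollary 3.6]{BR14}, is that $V\subseteq L^2(G)$ is $H$-invariant if and only if $M:=\T V$ is an MI space with respect to $D=\{(h,\cdot)\chi_{\W}\}_{h\in H}$. Indeed, since $\T$ is an isometric isomorphism, $M$ is a closed subspace, and the intertwining identity $\T T_hf(\w)=(-h,\w)\T f(\w)$ shows that $T_hV\subseteq V$ for all $h\in H$ is equivalent to $gM\subseteq M$ for all $g$ in the family $\{(-h,\cdot)\chi_{\W}\}_{h\in H}$; because $h\mapsto -h$ is a bijection of $H$, this family is exactly $D$.

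First I would establish the range-function description. For the direct implication, assume $V$ is $H$-invariant, so $M=\T V$ is MI with respect to $D$; Theorem~\ref{thm-MI-rango} yields a measurable range function $J$ with $M=\{\Phi\in L^2(\W,\ell^2(H^*)):\Phi(\w)\in J(\w)\ \textrm{a.e.}\}$. Since $\T$ is bijective and isometric, $f\in V\iff\T f\in M\iff\T f(\w)\in J(\w)$ for a.e.\ $\w$, which is the asserted description of $V$. Conversely, if $V$ admits such a $J$, then $\T V$ is precisely the MI space associated to $J$ by Theorem~\ref{thm-MI-rango}, hence $V$ is $H$-invariant by the bridge.

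Next I would obtain the bijective correspondence. The assignment $V\mapsto \T V$ is a bijection from the $H$-invariant subspaces of $L^2(G)$ onto the MI subspaces of $L^2(\W,\ell^2(H^*))$: it is injective and surjective because $\T$ is an isometric isomorphism, and it matches the two invariance notions by the bridge. Composing this bijection with the one-to-one and onto correspondence between MI spaces and measurable range functions supplied by Theorem~\ref{thm-MI-rango} gives the stated one-to-one and onto correspondence between $H$-invariant spaces and measurable range functions, modulo almost-everywhere equality.

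Finally, for the generator formula, suppose $V=\clspan\{T_h\varphi:h\in H,\varphi\in\A\}$. Because $\T$ is an isometric isomorphism it commutes with closed linear spans, so $\T V=\clspan\{\T(T_h\varphi):h\in H,\varphi\in\A\}$, and the intertwining identity rewrites this as $\clspan\{(-h,\cdot)\chi_{\W}\,\T\varphi:h\in H,\varphi\in\A\}=M_D(\T\A)$, where $\T\A=\{\T\varphi:\varphi\in\A\}$. Applying the last part of Theorem~\ref{thm-MI-rango} to $M_D(\T\A)$ identifies its range function as $J(\w)=\clspan\{\Phi(\w):\Phi\in\T\A\}=\clspan\{\T\varphi(\w):\varphi\in\A\}$, as claimed. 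The step requiring the most care is this last one: one must verify that $\T$ carries $\clspan\{T_h\varphi\}$ exactly onto $M_D(\T\A)$, which hinges on combining continuity and bijectivity of $\T$ (so that it preserves closures of spans) with the intertwining property and the identity $\{(-h,\cdot)\chi_{\W}\}_{h\in H}=D$. Everything else reduces to transporting the already-established Theorem~\ref{thm-MI-rango} across the isometry, so the genuine content of the argument lies entirely in Proposition~\ref{prop-fiber} and the bridge, not in the transfer itself.
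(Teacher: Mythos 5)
Your argument is correct and is exactly the route the paper intends: the paper states this theorem as a cited result (from \cite[Theorem 3.10]{CP10} and \cite[Theorem 3.8]{BR14}) and its surrounding discussion justifies it precisely by transferring Theorem \ref{thm-MI-rango} through the fiberization isometry of Proposition \ref{prop-fiber}, using that $D=\{(h,\cdot)\chi_{\W}\}_{h\in H}$ is a determining set and that $h\mapsto -h$ is a bijection of $H$. Your write-up fills in the same transfer argument in more detail, so there is nothing substantive to add.
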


Frames of translates can be also characterized using range functions and the fiberization isometry.
Indeed, when $\A\subseteq  L^2(G)$ is a countable set,  frames of the form $\{T_h\varphi: h\in H, \varphi\in\A\}$ for $V=\clspan\{T_h\varphi: h\in H, \varphi\in\A\}$ correspond to uniform frames for $J$ where $J$ is the measurable range function associated with $V$. For the case when $H$ is discrete, this fact was proven in \cite[Theorem 4.1]{CP10}. When $H$ is not discrete but co-compact, the set $\{T_h\varphi: h\in H, \varphi\in\A\}$ is not indexed by a discrete set and then one needs to work with the notion of continuous frame (see  Definition 5.1 in \cite{BR14} for details).  The characterization of continuous frames in terms of range functions was given in \cite[Theorem 5.1]{BR14}.
In the upcoming theorem, we state the characterization of frames of translates using range functions without distinguishing  between the discrete and the continuous case. The reader must have in mind that
when $H$ is not discrete the word ``frame'' refers to the notion of continuous frame  as \cite[Definition 5.1]{BR14}.

\begin{theorem}\label{thm-uniform-frames-LCA}
Let  $\A\subseteq  L^2(G)$ be a countable set, let $J$ be the measurable range function associated to $V=\clspan\{T_h\varphi: h\in H, \varphi\in\A\}$ and let $\T$ be the mapping of Proposition \ref{prop-fiber}. Then, the following conditions are equivalent:
\begin{enumerate}
 \item $\{T_h\varphi: h\in H, \varphi\in\A\}$ is a frame for $V$ with frame bounds $0<\alpha\leq \beta$.
 \item $\{\T\varphi:\, \varphi\in\A\}$ is a uniform frame for $J$ with frame bounds $0<\alpha\leq \beta$. This is, for a.e. $\w\in\W$, $\{\T\varphi(\w):\, \varphi\in\A\}$ is a frame for $J(\w)$ with (uniform) frame bounds $0<\alpha\leq \beta$.
\end{enumerate}
\end{theorem}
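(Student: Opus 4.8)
The plan is to push the frame inequality for the system $\{T_h\varphi\}$ through the fiberization isometry $\T$ of Proposition \ref{prop-fiber}, to recognize the transported frame operator as a direct integral of the fiberwise frame operators, and then to match the global frame bounds with the a.e.\ fiberwise bounds. First I would compute the frame coefficients. Since $\T$ is an isometric isomorphism, $\T T_h\varphi(\w)=(-h,\w)\T\varphi(\w)$, and $(-h,\w)=\overline{(h,\w)}$ for characters, one obtains
\[
\langle f, T_h\varphi\rangle=\langle \T f, \T T_h\varphi\rangle=\int_{\W}(h,\w)\,\langle \T f(\w),\T\varphi(\w)\rangle_{\ell^2(H^*)}\,d\w .
\]
Setting $F_\varphi(\w)=\langle \T f(\w),\T\varphi(\w)\rangle_{\ell^2(H^*)}$, the map $h\mapsto\langle f,T_h\varphi\rangle$ is exactly the Fourier transform of $F_\varphi$ under the duality $\widehat{G}/H^*\cong\widehat H$ realized by the section $\W$. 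The Plancherel theorem for this dual pair then gives $\int_H|\langle f,T_h\varphi\rangle|^2\,dh=\int_{\W}|F_\varphi(\w)|^2\,d\w$, where $\int_H$ is read as $\sum_{h\in H}$ in the discrete case (in which $\W$ has finite measure) and as an ordinary integral in the co-compact continuous case.

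Next I would sum over $\varphi\in\A$ and exchange the (nonnegative) sum with the integral by Tonelli to reach the central identity
\[
\sum_{\varphi\in\A}\int_H|\langle f,T_h\varphi\rangle|^2\,dh=\int_{\W}\sum_{\varphi\in\A}|\langle \T f(\w),\T\varphi(\w)\rangle|^2\,d\w .
\]
Together with $\|f\|^2=\|\T f\|^2=\int_{\W}\|\T f(\w)\|^2\,d\w$ and the fact from Theorem \ref{thm-characterization-LCA} that $f\in V$ holds iff $\T f(\w)\in J(\w)$ for a.e.\ $\w$, this displays the global frame operator of $\{T_h\varphi\}$ on $V$ as unitarily equivalent, via $\T$, to the direct integral $\int_{\W}^{\oplus}\mathcal{S}_\w\,d\w$, where $\mathcal{S}_\w$ is the frame operator of $\{\T\varphi(\w):\varphi\in\A\}$ acting on $J(\w)$. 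The implication $(2)\Rightarrow(1)$ is then immediate: evaluating the pointwise inequalities of $(2)$ at $\Theta=\T f(\w)\in J(\w)$ and integrating over $\W$ yields the frame inequality for $f\in V$ with the very same bounds $\alpha,\beta$.

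For $(1)\Rightarrow(2)$ I would argue by contradiction, and this localization step is the one I expect to be the main obstacle. For each fixed pair $a,b\in\ell^2(H^*)$ the function $\w\mapsto\langle\mathcal{S}_\w a,b\rangle=\sum_{\varphi}\langle a,\T\varphi(\w)\rangle\langle\T\varphi(\w),b\rangle$ is measurable, so $\{\mathcal{S}_\w\}$ is a measurable field of positive operators and both $\|\mathcal{S}_\w\|$ and the smallest nonzero eigenvalue $\lambda_{-}(\mathcal{S}_\w)$ are measurable in $\w$. Suppose the upper bound in $(2)$ failed on a set of positive measure; restricting to where the Bessel bound is finite (otherwise $(1)$ fails outright) I would find $E'\subseteq\W$ of positive finite measure on which $\beta<\langle\mathcal{S}_\w\Theta(\w),\Theta(\w)\rangle$ for some measurable unit section $\w\mapsto\Theta(\w)\in J(\w)$, the existence of such a section being exactly the measurable selection on the range function that makes this step delicate. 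Then $f=\T^{-1}(\chi_{E'}\Theta)\in V$ satisfies $\|f\|^2=|E'|$, while the central identity gives $\sum_{\varphi}\int_H|\langle f,T_h\varphi\rangle|^2\,dh=\int_{E'}\langle\mathcal{S}_\w\Theta(\w),\Theta(\w)\rangle\,d\w>\beta|E'|=\beta\|f\|^2$, contradicting the upper bound in $(1)$. The lower bound is symmetric, using that $\lambda_{-}$ is computed on the fibers $J(\w)=\clspan\{\T\varphi(\w):\varphi\in\A\}$ rather than on all of $\ell^2(H^*)$. Once the measurability of these eigenvalue fields and the measurable selection compatible with $J$ are secured—and once the Plancherel identity is stated in its continuous-frame form for non-discrete $H$—the equivalence, with matching frame bounds, follows.
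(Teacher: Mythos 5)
The paper itself does not prove Theorem \ref{thm-uniform-frames-LCA}: it is imported from \cite{CP10} (Theorem 4.1, discrete $H$) and \cite{BR14} (Theorem 5.1, co-compact $H$ and continuous frames), so there is no in-paper argument to compare against. Your outline is, however, essentially the argument of those references (which descends from Bownik's $L^2(\R^d)$ version): transport the frame inequality through $\T$, use Plancherel for the dual pair $H \leftrightarrow \widehat{G}/H^*$ to convert $\sum_{h\in H}|\langle f,T_h\varphi\rangle|^2$ (resp.\ $\int_H$) into $\int_{\W}|\langle \T f(\w),\T\varphi(\w)\rangle|^2\,d\w$, sum over $\varphi\in\A$ by Tonelli, and then match the global bounds with the a.e.\ fiberwise bounds via the range function. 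Two points in your sketch need to be nailed down before it is a complete proof. First, the Plancherel identity must be read in $[0,+\infty]$: the function $F_\varphi=\langle \T f(\cdot),\T\varphi(\cdot)\rangle$ is a priori only in $L^1(\W)$, and one should invoke the fact that an $L^1$ function whose Fourier transform is square-summable (square-integrable) is automatically in $L^2$, so that the identity holds with both sides possibly infinite. Second, the measurable selection you flag in $(1)\Rightarrow(2)$ is a genuine step rather than a formality, and your sketch stops short of providing it; the standard device is to take a countable dense subset $\{d_k\}_k$ of $\ell^2(H^*)$ and use the measurable vector fields $\w\mapsto P_{J(\w)}d_k$, whose values are dense in $J(\w)$ for a.e.\ $\w$ because $J$ is a measurable range function. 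The set of fibers where a bound fails is then a countable union of measurable sets, and a measurable unit section $\Theta$ witnessing the failure is obtained by selecting, for each $\w$, the first $k$ that works. With those two points supplied, your proof is correct, with matching frame bounds, and coincides with the proofs in the cited references.
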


We already have all the ingredients we need to see how the results
of Section \ref{sec-results-MI}  can be applied to this setting.
Fix $\{\phi_1, \ldots, \phi_m\}\subseteq L^2(G)$ and consider the $H$-invariant space generated by
$\{\phi_1, \ldots, \phi_m\}$, $V=\clspan\{T_h\phi_j: h\in H, 1\leq j\leq m\}$.
By taking linear combinations of $\{\phi_1, \ldots, \phi_m\}$ we want to construct new sets of generators for $V$. As we did before for the case of generator for subspaces and for MI spaces, we consider sets of functions in $L^2(G)$,  $\{\psi_1, \ldots, \psi_{\ell}\}$ where for every $1\leq j\leq m$, $\psi_j=\sum_{i=1}^m a_{ij}\phi_j$ and $\ell$ is a number between the length of the MI space $\T V$ and $m$.  Collecting the coefficients of the linear combinations in a matrix $A\in\C^{\ell\times m}$ and letting $\Phi$ and $\Psi$ be the vectors of functions $\Phi=(\phi_1, \ldots, \phi_m)$ and
$\Psi=(\psi_1, \ldots, \psi_{\ell})$ we can write $\Psi=A\Phi^t$.
In the next theorem we prove that for almost every matrix $A\in\C^{\ell\times m}$, the functions $\{\psi_1, \ldots, \psi_{\ell}\}$ generate $V$. This result extends \cite[Theorem 1]{BK06} to the context of LCA groups, and moreover, since $H$ is allowed to be non discrete, it is  new even in the case when $G=\R^d$.

\begin{theorem}\label{thm-translate-lca-gen}
 Given $\{\phi_1, \ldots, \phi_m\}\subseteq L^2(G)$ let $V=\clspan\{T_h\phi_j: h\in H, 1\leq j\leq m\}$
 and let $\ell(M)$ be the length of $M=\T V$ where $\T$ is the fiberization isometry of Proposition \ref{prop-fiber}.
 For $\ell(M)\leq \ell\leq m$, let $\mathcal{R}$ be the set of matrices $A=\{a_{ij}\}_{ij}\in\C^{\ell\times m}$ such that the linear combinations $\psi_j=\sum_{i=1}^m a_{ij}\phi_j$ generate $V$, i.e. $V=\clspan\{T_h\psi_i: h\in H, 1\leq i\leq \ell\}$. Then, $\C^{\ell\times m}\setminus \mathcal{R}$ has Lebesgue zero measure.
\end{theorem}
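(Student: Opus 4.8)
The plan is to transport the problem to the multiplicatively invariant setting through the fiberization isometry $\T$ of Proposition \ref{prop-fiber} and then invoke Theorem \ref{lc-of-MI-generators} directly. First I would set $\Phi_j=\T\phi_j$ and $\Psi_i=\T\psi_i$, and write $\bPhi=\{\Phi_1,\ldots,\Phi_m\}$ and $\bPsi=\{\Psi_1,\ldots,\Psi_{\ell}\}$. Since $\T$ is linear, applying it to the linear combinations $\psi_i=\sum_{j=1}^m a_{ij}\phi_j$ gives $\Psi_i=\sum_{j=1}^m a_{ij}\Phi_j$, so in matrix notation $\varPsi=A\varPhi^t$, which is exactly the relationship under which Theorem \ref{lc-of-MI-generators} operates.

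The crucial step is to recognize that the two notions of ``generating $V$'' --- by $H$-translates in $L^2(G)$ and as a MI space in $L^2(\W,\ell^2(H^*))$ --- coincide under $\T$. By Proposition \ref{prop-fiber} we have $\T T_h\phi_j(\w)=(-h,\w)\T\phi_j(\w)$, so translation by $h$ is intertwined with multiplication by the function $(-h,\cdot)\chi_{\W}$, which is a member of the determining set $D=\{(h,\cdot)\chi_{\W}\}_{h\in H}$ (note that $-h$ again ranges over all of $H$). Because $\T$ is an isometric isomorphism it commutes with the closure of spans, hence
\begin{align*}
\T V&=\clspan\{\T T_h\phi_j:h\in H,\, 1\le j\le m\}\\
&=\clspan\{(-h,\cdot)\chi_{\W}\,\Phi_j:h\in H,\, 1\le j\le m\}=M_D(\bPhi).
\end{align*}
Thus $M=\T V=M_D(\bPhi)$, and the same argument applied to $\{\psi_i\}$ gives $\T\big(\clspan\{T_h\psi_i:h\in H,\,1\le i\le\ell\}\big)=M_D(\bPsi)$. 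Since $\T$ is injective, $V=\clspan\{T_h\psi_i:h\in H,\,1\le i\le\ell\}$ holds if and only if $M=M_D(\bPsi)$.

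Putting these together, the set $\mathcal{R}$ of Theorem \ref{thm-translate-lca-gen} is precisely $\{A\in\C^{\ell\times m}:M=M_D(\bPsi),\ \varPsi=A\varPhi^t\}$, which is exactly the set $\mathcal{R}$ appearing in Theorem \ref{lc-of-MI-generators} for the finitely generated MI space $M$ with generator set $\bPhi$ (note $\ell(M)\le\ell\le m$ as required). Theorem \ref{lc-of-MI-generators} then yields that $\C^{\ell\times m}\setminus\mathcal{R}$ has zero Lebesgue measure, which is the desired conclusion. I do not anticipate a serious obstacle: the only point requiring care is the verification that $\T$ carries the $H$-translation generation structure onto the MI generation structure, and this is handled cleanly by the intertwining relation of Proposition \ref{prop-fiber} together with the explicit identification of the determining set $D$. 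Everything else is a direct transcription of the already-proven MI result.
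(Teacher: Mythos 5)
Your proposal is correct and follows exactly the same route as the paper's proof: transport everything through the fiberization isometry $\T$, identify generation by $H$-translates with MI-generation with respect to $D=\{(h,\cdot)\chi_{\W}\}_{h\in H}$, and then apply Theorem \ref{lc-of-MI-generators}. You simply spell out a few details (linearity of $\T$, the intertwining relation, the identification of $\mathcal{R}$) that the paper leaves implicit.
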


\begin{proof}
Let $A\in\C^{\ell\times m}$ and consider the functions $\{\psi_1, \ldots, \psi_{\ell}\}$ where $\Psi=A\Phi^t$. Then, $\{\psi_1, \ldots, \psi_{\ell}\}$ is a set of generators for $V$ if and only if $\{\T\psi_1, \ldots, \T\psi_{\ell}\}$ generates $M$ as a MI space with respect to
$D=\{(h, \cdot)\chi_{\W}(\cdot)\}_{h\in H}$.
Denoting  $\bPhi=\{\T\phi_1, \ldots, \T\phi_{m}\}$, $\bPsi=\{\T\psi_1, \ldots, \T\psi_{\ell}\}$,
$\varPsi=(\T\psi_1, \ldots, \T\psi_{\ell})$ and $\varPhi=(\T\phi_1, \ldots, \T\phi_{m})$, we that have that $\mathcal{R}=\{A\in\C^{\ell\times m}: M=M_D(\bPsi), \, \varPsi=A\varPhi^t\}$.
Thus, by Theorem \ref{lc-of-MI-generators},  $\C^{\ell\times m}\setminus \mathcal{R}$ has Lebesgue zero measure.
\end{proof}

 The next theorem is an extension to LCA groups of \cite[Theorem 4.4]{CMP14}.
\begin{theorem}\label{frames-LCA}
Let $\{\phi_1,\cdots, \phi_m\}\subseteq L^2(G)$ such that $\{T_h\phi_j: h\in H, 1\leq j\leq m\}$ is a frame for $V=\clspan \{T_h\varphi_j: h\in H, 1\leq j\leq m\}$ and suppose that $\ell(M)\leq \ell\leq m$, where $M=\T V$ and $\T$ is as in Proposition \ref{prop-fiber}.
Let $A\in\C^{\ell\times m}$ be a matrix and consider $\{\psi_1,\cdots, \psi_{\ell}\}$ where $\Psi=A\Phi^t$. Then, $\{T_h\psi_i: h\in H, 1\leq i\leq \ell\}$ is a
frame for $V$ if and only if $A$ satisfies the following two conditions
\begin{enumerate}
\item $A\in\mathcal{R}$, where $\mathcal{R}$ is as in Theorem \ref{thm-translate-lca-gen}.
\item There exists $\delta>0$ such that $\s\,[Ker(A), Im(G_{\bPhi(\w)})]\ge \delta$ for a.e. $\w\in\W$, where
$G_{\bPhi(\w)}$ is the Gramian associated to $\{\T\phi_1(\w), \ldots, \T\phi_{m}(\w)\}$.
\end{enumerate}
\end{theorem}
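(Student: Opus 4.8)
The plan is to reduce Theorem~\ref{frames-LCA} directly to Theorem~\ref{thm-frames} by transporting the frame problem for systems of translates in $L^2(G)$ to the corresponding uniform-frame problem for MI spaces in $L^2(\W, \ell^2(H^*))$ via the fiberization isometry $\T$ of Proposition~\ref{prop-fiber}. The essential point is that every object appearing in the statement---the generating system, the frame condition, the kernel and image conditions on $A$---has an exact counterpart on the MI side, so that the equivalence we want becomes the equivalence already established in Theorem~\ref{thm-frames}.

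First I would set up the translation dictionary. Put $\bPhi = \{\T\phi_1, \ldots, \T\phi_m\}$ and $\bPsi = \{\T\psi_1, \ldots, \T\psi_\ell\}$, with $\varPhi = (\T\phi_1, \ldots, \T\phi_m)$ and $\varPsi = (\T\psi_1, \ldots, \T\psi_\ell)$. Since $\T$ is linear and $\Psi = A\Phi^t$, applying $\T$ componentwise gives $\varPsi = A\varPhi^t$, so the matrix $A$ relating the translates in $L^2(G)$ is exactly the matrix relating the fibered generators of the MI space $M = \T V$. By Theorem~\ref{thm-characterization-LCA}, $M$ is the MI space $M_D(\bPhi)$ with respect to $D = \{(h,\cdot)\chi_\W(\cdot)\}_{h\in H}$, and $J$ is its associated measurable range function. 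The next step is to invoke Theorem~\ref{thm-uniform-frames-LCA}: the hypothesis that $\{T_h\phi_j : h\in H,\, 1\le j\le m\}$ is a frame for $V$ is equivalent to $\bPhi$ being a uniform frame for $J$, and likewise $\{T_h\psi_i : h\in H,\, 1\le i\le \ell\}$ is a frame for $V$ if and only if $\bPsi$ is a uniform frame for $J$. Thus the frame statement for translates is word-for-word the uniform-frame statement on the MI side.

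With this dictionary in place, the conclusion is immediate. Theorem~\ref{thm-frames} asserts that $\bPsi$ is a uniform frame for $J$ if and only if $A \in \mathcal{R}$ (the generator-preserving set of Theorem~\ref{lc-of-MI-generators}) and there exists $\delta > 0$ with $\s\,[Ker(A), Im(G_{\bPhi(\w)})] \ge \delta$ for a.e.\ $\w \in \W$. Condition~(1) of Theorem~\ref{frames-LCA} is precisely $A \in \mathcal{R}$ as described in Theorem~\ref{thm-translate-lca-gen}, whose $\mathcal{R}$ coincides with the MI-generator set for $M = \T V$; and condition~(2) is literally the Friedrichs-angle condition from Theorem~\ref{thm-frames}, since $G_{\bPhi(\w)}$ is the Gramian of the fibers $\{\T\phi_1(\w), \ldots, \T\phi_m(\w)\}$ in both statements. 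Chaining the two equivalences---frame of translates $\iff$ uniform frame for $J$ (Theorem~\ref{thm-uniform-frames-LCA}), and uniform frame for $J$ $\iff$ conditions (1),(2) (Theorem~\ref{thm-frames})---yields the theorem.

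The proof is essentially a transfer argument, so there is no deep obstacle; the only point requiring care is bookkeeping, namely verifying that the set $\mathcal{R}$ of Theorem~\ref{thm-translate-lca-gen} really agrees with the set $\mathcal{R}$ of Theorem~\ref{lc-of-MI-generators} applied to $\bPhi$, and that the measurable range function $J$ attached to $V$ via Theorem~\ref{thm-characterization-LCA} is the same one attached to $M_D(\bPhi)$. Both identifications follow from the one-to-one, onto correspondence between $H$-invariant spaces, MI spaces, and measurable range functions, together with the fact that $\T$ intertwines the translation action on $L^2(G)$ with multiplication by characters in $D$. Once these identifications are recorded, the result follows directly from Theorems~\ref{thm-uniform-frames-LCA} and~\ref{thm-frames}, with no further computation.
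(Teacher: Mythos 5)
Your proposal is correct and follows exactly the route the paper intends: the paper states Theorem~\ref{frames-LCA} without a written proof precisely because it is the same fiberization transfer used to prove Theorem~\ref{thm-translate-lca-gen}, combined with the equivalence of Theorem~\ref{thm-uniform-frames-LCA} and the MI-space characterization of Theorem~\ref{thm-frames}. Your bookkeeping of the dictionary ($\varPsi = A\varPhi^t$ under $\T$, identification of the two sets $\mathcal{R}$, and of the range function $J$) is exactly what is needed, so nothing is missing.
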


\begin{remark}
 Given $\Delta\subseteq \widehat{G}$  a co-compact subgroup of the dual group of $G$ and $\A\subseteq L^2(G)$, let us consider the system $\{M_\delta\phi:\,\phi\in\A, \delta\in\Delta\}$ where $M_\delta$ is the modulation operator given by $M_\delta\phi(x)=(x,\delta)\phi(x)$.
 Since under the Fourier transform modulations become translations, all the results we have proven for systems of translates can be reformulated for  systems of modulations.
 Furthermore, one may also consider systems of time-frequency translates  $\{M_\delta T_h\phi:\,\phi\in\A, \delta\in\Delta, h\in H\}$ where $H\subseteq G$ and $\Delta\subseteq \widehat{G}$ are discrete subgroups and $\A\subseteq L^2(G)$. Spaces that are the closure of the span of systems of time-frequency translates are called {\it shift-modulation invariant spaces} or Gabor spaces. Using fiberization techniques and range functions, a characterization of theses spaces was given in \cite{CP12}. Therefore, this setting is one more example where the results of Section \ref{sec-results-MI} can be applied.
\end{remark}

\subsection{Discrete LCA groups acting on $\sigma$-finite  measure spaces}
We are interested now in systems of functions constructed by the action of a discrete  LCA group $\Gamma$ on $L^2(\X)$ where $(\X, \mu)$ is a $\sigma$-finite measure space. We will work with  {\it quasi-$\Gamma$-invariant} actions.
This notion was  introduced in \cite{HSWW10} and then extended to the non abelian case in \cite{BHP14a}.
Fix $\Gamma$ a discrete countable LCA group. Let $(\X,\mu)$ be a $\sigma$-finite measure space and $\sigma:\Gamma\times\X\to\X$ a measurable action satisfying the following conditions:
\begin{enumerate}
 \item [(i)] for each $\g\in\Gamma$ the map $\sigma_{\g}:\X\to\X$ given by $\sigma_{\g}(x):=\sigma(\g,x)$ is $\mu$-
measurable;
\item [(ii)] $\sigma_{\g}(\sigma_{\g'}(x))=\sigma_{\g+\g'}(x)$, for all $\g,\g'\in\Gamma$ and for all $x\in\X$;
\item [(iii)] $\sigma_{e}(x)=x$ for all $x\in\X$, where $e$ is the identity of $\Gamma$.
\end{enumerate}
The action $\sigma$ is said to be {\it quasi-$\Gamma$-invariant} if there exists  a measurable function $J_{\sigma}:\Gamma\times\X\to\R^{+}$, called {\it Jacobian of $\sigma$}, such that
$d\mu(\sigma_{\g}(x))=J_{\sigma}(\g,x)d\mu.$
To each quasi-$\Gamma$-invariant action $\sigma$ we can associate a unitary representation $T_{\sigma}$ of $\Gamma$ on $L^2(\X)$ given by
$T_{\sigma}(\g)f(x)=J_{\sigma}(-\g,x)^{\frac1{2}}f(\sigma_{-\g}(x)).$

Given a quasi-$\Gamma$-invariant action $\sigma$, we say that a closed subspace $V$ of $L^2(\X)$ is  {\it $\Gamma$-invariant} if
$$f\in V\Longrightarrow T_{\sigma}(\g)f\in V, \,\, \textrm{ for any }\,\,\g\in\Gamma.$$
When $L^2(\X)$ is separable, each $\Gamma$-invariant spaces is of the form
$V=\clspan\{T_{\sigma}(\g)\varphi:\g\in\Gamma, \varphi\in\A\}$ for some at most countable set $\A\subseteq L^2(\X)$.

In order to obtain the analogous results to Theorems \ref{thm-translate-lca-gen} and \ref{frames-LCA} for systems of the form $\{T_{\sigma}(\g)\phi_j\}_{j=1}^m$ using the machinery of MI spaces of Section \ref{sec-results-MI} we first  need to establish a connection between $L^2(\X)$   and a vector valued space of the type $L^2(\W, \mathcal{H})$. We can do this assuming
that the quasi-$\Gamma$-invariant action $\sigma$ satisfies the {\it tiling property}. This is, there exists a measurable subset $C\subseteq \X$ such that $\mu(\X\setminus\bigcup_{\g\in\Gamma}\sigma_{\g}(C))=0$ and $\mu(\sigma_{\g}(C)\cap\sigma_{\g'}(C))=0$ whenever $\g\neq\g'$. In this case it can be shown (see \cite{BHP14a} and \cite{BHP14b}) that there exists  an isometric isomorphism between $L^2(\X)$ and the vector valued space $L^2(\widehat{\Gamma}, L^2(C))$.
\begin{proposition}\cite[Proposition 3.3]{BHP14b}\label{abelian-tau-isometry}
 The mapping $\T_{\sigma}: L^2(\X) \longrightarrow  L^2(\widehat{\Gamma}, L^2(C))$ defined by
 $$\T_{\sigma}[\psi](\alpha)(x):=\sum_{\g\in\Gamma}
 \left[(T_{\sigma}(\g)\psi)(x)\right](-\g, \alpha)$$
 is an isometric isomorphism and it satisfies
$\T_{\sigma}[T_{\sigma}(\g)\psi](\alpha)=(\g,\alpha)\T_{\sigma}[\psi]$.
\end{proposition}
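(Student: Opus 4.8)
The plan is to handle the covariance identity first, by a direct reindexing of the defining sum, and then to establish that $\T_\sigma$ is a surjective isometry, which is the substantive part. For the covariance relation I would use that $T_\sigma$ is a representation, so that $T_\sigma(\g)T_\sigma(\g_0)=T_\sigma(\g+\g_0)$, and write
\[
\T_\sigma[T_\sigma(\g_0)\psi](\alpha)(x)=\sum_{\g\in\Gamma}\big[(T_\sigma(\g+\g_0)\psi)(x)\big](-\g,\alpha).
\]
Substituting $\g'=\g+\g_0$ and using that the pairing is multiplicative in its first argument, $(-\g,\alpha)=(\g_0,\alpha)(-\g',\alpha)$, one factors $(\g_0,\alpha)$ out of the sum and recovers $(\g_0,\alpha)\,\T_\sigma[\psi](\alpha)(x)$, which is the asserted identity.

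For the isometry I would compute $\|\T_\sigma[\psi]\|^2_{L^2(\widehat{\Gamma},L^2(C))}$ in three stages. Since $\Gamma$ is discrete and countable, $\widehat{\Gamma}$ is compact and metrizable, and with normalized Haar measure the characters $\{\alpha\mapsto(-\g,\alpha)\}_{\g\in\Gamma}$ form an orthonormal basis of $L^2(\widehat{\Gamma})$. Writing $c_\g(x)=(T_\sigma(\g)\psi)(x)$, the series $\sum_\g c_\g(x)(-\g,\alpha)$ is the Fourier expansion of $\T_\sigma[\psi](\,\cdot\,)(x)$, so Plancherel in the $\alpha$ variable gives, for a.e.\ $x\in C$,
\[
\int_{\widehat{\Gamma}}\big|\T_\sigma[\psi](\alpha)(x)\big|^2\,d\alpha=\sum_{\g\in\Gamma}\big|(T_\sigma(\g)\psi)(x)\big|^2.
\]
Integrating over $C$ and interchanging sum and integral by Tonelli reduces the norm to $\sum_{\g}\int_C|(T_\sigma(\g)\psi)(x)|^2\,d\mu(x)$. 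Expanding $(T_\sigma(\g)\psi)(x)=J_\sigma(-\g,x)^{1/2}\psi(\sigma_{-\g}(x))$ and changing variables $y=\sigma_{-\g}(x)$, the Jacobian relation $d\mu(\sigma_\g(y))=J_\sigma(\g,y)\,d\mu(y)$ together with the cocycle identity $J_\sigma(-\g,\sigma_\g(y))\,J_\sigma(\g,y)=1$ — itself a consequence of $J_\sigma(0,\cdot)=1$ and $J_\sigma(\g+\g',\cdot)=J_\sigma(\g,\sigma_{\g'}(\cdot))\,J_\sigma(\g',\cdot)$ — cancels all Jacobian factors and leaves $\int_{\sigma_{-\g}(C)}|\psi(y)|^2\,d\mu(y)$. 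Summing over $\g$ and invoking the tiling property, so that $\{\sigma_{-\g}(C)\}_{\g}$ is an essentially disjoint cover of $\X$, reassembles these pieces into $\int_\X|\psi|^2\,d\mu=\|\psi\|^2$.

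To upgrade the isometry to an isomorphism I would exhibit the inverse directly. Given $F\in L^2(\widehat{\Gamma},L^2(C))$, expand $F(\,\cdot\,)(x)$ in the same orthonormal basis to obtain coefficients $c_\g(x)=\int_{\widehat{\Gamma}}F(\alpha)(x)\,(\g,\alpha)\,d\alpha$ satisfying $\sum_\g\int_C|c_\g(x)|^2\,d\mu(x)=\|F\|^2$. Using the tiling property to write a.e.\ point of $\X$ uniquely as $\sigma_{-\g}(x)$ with $x\in C$, set $\psi(\sigma_{-\g}(x)):=J_\sigma(-\g,x)^{-1/2}c_\g(x)$. The same change of variables as above shows $\psi\in L^2(\X)$ with $\|\psi\|=\|F\|$, and by construction $(T_\sigma(\g)\psi)(x)=c_\g(x)$ on $C$, so that $\T_\sigma[\psi]=F$; hence $\T_\sigma$ is onto.

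The hard part will be measure-theoretic bookkeeping rather than any deep idea. One must justify the fiberwise Plancherel — which requires $\{c_\g(x)\}_\g\in\ell^2(\Gamma)$ for a.e.\ $x\in C$, a fact that itself drops out of the finiteness of the total sum once $\psi\in L^2(\X)$ — and the Tonelli interchange, both unproblematic because every term is nonnegative. The genuinely delicate points are the measurability and change-of-variables arguments on each tile $\sigma_{-\g}(C)$, and the verification, from the tiling hypothesis, that the essentially disjoint union $\bigcup_{\g}\sigma_{-\g}(C)$ exhausts $\X$ up to a null set, so that the glued function is well defined on all of $\X$. The cocycle identity for $J_\sigma$ is the single algebraic observation that makes the Jacobian factors cancel, and it is worth recording before the computation begins.
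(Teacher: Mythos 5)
The paper does not prove this proposition; it is quoted verbatim from \cite[Proposition 3.3]{BHP14b}, so there is no internal argument to compare against. Your proof is the standard Zak-transform argument one expects to find in that reference, and it is correct: the covariance identity by reindexing, the fiberwise Plancherel over $\widehat{\Gamma}$ (valid since $\Gamma$ discrete makes $\widehat{\Gamma}$ compact with the characters $X_{\g}$ an orthonormal basis of $L^2(\widehat{\Gamma})$), the cocycle identity $J_{\sigma}(-\g,\sigma_{\g}(y))J_{\sigma}(\g,y)=1$ to cancel the Jacobians, and the tiling property to reassemble $\int_{\X}|\psi|^2$; the explicit inverse gives surjectivity. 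The only point worth making explicit is that the defining series must be read as an $L^2(\widehat{\Gamma})$-limit for a.e.\ $x\in C$ rather than a pointwise sum, which is licensed exactly by the a.e.\ square-summability of $\{(T_{\sigma}(\g)\psi)(x)\}_{\g}$ that you extract, via Tonelli, from $\|\psi\|^2<\infty$ --- you flag this yourself, so there is no gap.
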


As for the case of ordinary translates  of the previous section, the isomorphism $\T_{\sigma}$ of
Proposition \ref{abelian-tau-isometry} connects $\Gamma$-invariant spaces of $L^2(\X)$ with MI spaces in
$L^2(\widehat{\Gamma}, L^2(C))$. Here the  determining set $D$ is the set of characters of $\widehat{\Gamma}$. More precisely, for every $\g\in\Gamma$, let $X_{\g}:\widehat{\Gamma}\to \C$ be the homomorphism defined as $X_{\g}(\alpha)=(\g,\alpha)$. Then, by the Pontrjagin Duality
\cite[Theorem 1.7.2]{Rud62}, $\{X_{\g}\}_{\g\in\Gamma}$ is the set of  characters of $\widehat{\Gamma}$
and thus, as a consequence of the uniqueness of the Fourier transform, $D= \{X_{\g}\}_{\g\in\Gamma}$ is a determining set for $L^1(\widehat{\Gamma})$. Therefore, it is possible to characterize $\Gamma$-invariant spaces using range functions obtaining a similar result to Theorem \ref{thm-characterization-LCA}. Furthermore, it can be also proven characterizations of frames of the form
$\{T_{\sigma}(\g)\varphi:\g\in\Gamma, \varphi\in\A\}$ for $V=\clspan\{T_{\sigma}(\g)\varphi:\g\in\Gamma, \varphi\in\A\}$ in the same spirit of Theorem  \ref{thm-uniform-frames-LCA}. We do not include here the complete statements of these results because we consider it is clear for the reader how they must be (see \cite[Theorems 4.3 and 5.1]{BHP14b} for details and proofs).

In a similar way as we proved Theorem \ref{thm-translate-lca-gen}, the following result can be shown:

\begin{theorem}
Given $\{\phi_1, \ldots, \phi_m\}\subseteq L^2(\X)$ let $V=\clspan\{T_{\sigma}(\g)\phi_j: \g\in \Gamma, 1\leq j\leq m\}$
 and let $\ell(M)$ be the length of $M=\T_{\sigma} [V]$ where $\T_{\sigma}$ is as in Proposition \ref{abelian-tau-isometry}. For $\ell(M)\leq \ell\leq m$,
let $\mathcal{R}$ be the set of matrices $A=\{a_{ij}\}_{ij}\in\C^{\ell\times m}$ such that the linear combinations $\psi_j=\sum_{i=1}^m a_{ij}\phi_j$ generate $V$, i.e. $V=\clspan\{T_{\sigma}(\g)\psi_i: \g\in \Gamma, 1\leq i\leq \ell\}$. Then, $\C^{\ell\times m}\setminus \mathcal{R}$ has Lebesgue zero measure.

 If in addition $\{T_{\sigma}(\g)\phi_j: \g\in \Gamma, 1\leq j\leq m\}$ is a frame for $V$, then $\{T_{\sigma}(\g)\psi_i: \g\in \Gamma, 1\leq i\leq \ell\}$ is also a frame for $V$ if and only if $A\in\mathcal{R}$ and there exists $\delta>0$ such that $\s\,[Ker(A), Im(G_{\bPhi(\alpha)})]\ge \delta$ for a.e. $\alpha\in\widehat{\Gamma}$, where
$G_{\bPhi(\alpha)}$ is the Gramian associated to $\{\T_{\sigma}[\phi_1](\alpha), \ldots, \T_{\sigma}[\phi_{m}](\alpha)\}$.
 \end{theorem}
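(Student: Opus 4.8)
The plan is to prove both parts by transferring the statements into the multiplicatively invariant setting through the fiberization isometry $\T_{\sigma}$ of Proposition \ref{abelian-tau-isometry}, and then to quote Theorems \ref{lc-of-MI-generators} and \ref{thm-frames} verbatim. This is exactly the strategy behind the proof of Theorem \ref{thm-translate-lca-gen}; the only change is that the isometry $\T$ of Proposition \ref{prop-fiber} is replaced by $\T_{\sigma}$ and the determining set becomes $D=\{X_{\g}\}_{\g\in\Gamma}$ identified above via Pontrjagin duality.

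First I would fix $A\in\C^{\ell\times m}$ and form $\{\psi_1,\ldots,\psi_{\ell}\}$ with $\Psi=A\Phi^t$. Since $\T_{\sigma}$ is linear, applying it to each $\psi_i=\sum_{j}a_{ij}\phi_j$ gives $\T_{\sigma}[\psi_i]=\sum_{j}a_{ij}\T_{\sigma}[\phi_j]$, so, writing $\bPsi=\{\T_{\sigma}[\psi_1],\ldots,\T_{\sigma}[\psi_{\ell}]\}$, $\bPhi=\{\T_{\sigma}[\phi_1],\ldots,\T_{\sigma}[\phi_m]\}$ and collecting these into the tuples $\varPhi$ and $\varPsi$, one has $\varPsi=A\varPhi^t$ in $L^2(\widehat{\Gamma},L^2(C))$ with the combining matrix untouched. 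Because $\T_{\sigma}$ is an isometric isomorphism intertwining the $\Gamma$-action with multiplication by the characters $X_{\g}$, it carries $\Gamma$-invariant spaces onto MI spaces with respect to $D=\{X_{\g}\}_{\g\in\Gamma}$; in particular $M=\T_{\sigma}[V]=M_D(\bPhi)$, and $\{\psi_i\}$ generates $V$ as a $\Gamma$-invariant space if and only if $\bPsi$ generates $M$ as an MI space. Hence the set $\mathcal{R}$ of the theorem coincides with $\{A\in\C^{\ell\times m}:M=M_D(\bPsi),\,\varPsi=A\varPhi^t\}$, and the first assertion follows immediately from Theorem \ref{lc-of-MI-generators}.

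For the frame statement I would invoke the analogue of Theorem \ref{thm-uniform-frames-LCA} valid in this setting, established in \cite[Theorems 4.3 and 5.1]{BHP14b}: the system $\{T_{\sigma}(\g)\phi_j\}$ is a frame for $V$ if and only if $\bPhi$ is a uniform frame for $J$, and likewise $\{T_{\sigma}(\g)\psi_i\}$ is a frame for $V$ if and only if $\bPsi$ is a uniform frame for $J$. Under these equivalences the hypothesis becomes that $\bPhi$ is a uniform frame for $J$ and the desired conclusion becomes that $\bPsi$ is a uniform frame for $J$, so applying Theorem \ref{thm-frames} to $\bPhi$ and $\bPsi$ yields precisely the two stated conditions: $A\in\mathcal{R}$ together with the uniform lower bound $\s\,[Ker(A),Im(G_{\bPhi(\alpha)})]\ge\delta$ for a.e. $\alpha\in\widehat{\Gamma}$, where $G_{\bPhi(\alpha)}$ is the Gramian of $\{\T_{\sigma}[\phi_1](\alpha),\ldots,\T_{\sigma}[\phi_m](\alpha)\}$. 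The argument is essentially a dictionary translation, so no genuinely hard step is expected; the only points requiring care are purely bookkeeping, namely that $\T_{\sigma}$ respects linear combinations (immediate from linearity) and that the determining set is the correct one. The single nontrivial external ingredient is the frame-correspondence analogue of Theorem \ref{thm-uniform-frames-LCA}; once it is in hand from \cite{BHP14b}, both parts reduce verbatim to Theorems \ref{lc-of-MI-generators} and \ref{thm-frames}.
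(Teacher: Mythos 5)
Your proposal is correct and follows exactly the route the paper intends: the paper itself only remarks that the result ``can be shown in a similar way as Theorem \ref{thm-translate-lca-gen}'', i.e., by transferring everything through $\T_{\sigma}$ to the MI setting with determining set $D=\{X_{\g}\}_{\g\in\Gamma}$ and then invoking Theorems \ref{lc-of-MI-generators} and \ref{thm-frames} together with the frame correspondence from \cite{BHP14b}. Your bookkeeping of the linearity of $\T_{\sigma}$ and the identification of $\mathcal{R}$ matches the paper's proof of Theorem \ref{thm-translate-lca-gen} essentially verbatim.
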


 \subsection*{Acknowledgements}
This project was carried out when the author was  supported by a fellowship for postdoctoral researchers from the Alexander von Humboldt Foundation. She wants to thank Gitta Kutyniok for supporting her during her stay in Technische Universit\"{a}t Berlin. Also the author wants to thank the referee for his/her comments  which helped to improve the manuscript and to Carlos Cabrelli for carefully reading the paper.


\end{document}